 \newtheorem{prop}{Proposition}[section]
 \newtheorem{lemma}{Lemma}[section]
 \newtheorem{thm}{Theorem}[section]
 \theoremstyle{definition}
 \newtheorem{definition}{Definition}[section]
 \theoremstyle{remark}
 \newtheorem{remark}{Remark}
 \newcommand{\E}{\mathbb{E}}
 \newcommand{\N}{\mathbb{N}}
 \newcommand{\R}{\mathbb{R}}
 \newcommand{\T}{\mathbb{T}}
 \newcommand{\W}{\boldsymbol{\mathcal{W}}}
 \newcommand{\Z}{\mathbb{Z}}
 \renewcommand{\geq}{\geqslant}
 \renewcommand{\leq}{\leqslant}
 \numberwithin{equation}{section}
 \numberwithin{figure}{section}
 \long\def\symbolfootnote[#1]#2{\begingroup%
 \def\thefootnote{\fnsymbol{footnote}}\footnote[#1]{#2}\endgroup}
\begin{document}

% BEGINNING OF Header first page
\thispagestyle{empty}

 \vskip2cm
 {\centering
 \Large{\bf Explicit Bivariate Rate Functions for Large Deviations
in AR(1) and MA(1) Processes with Gaussian Innovations}\\
 }
 \vspace{1.0cm}
 \centerline{\large{\bf{M.J. Karling, A.O. Lopes  and S.R.C. Lopes
\symbolfootnote[3]{Corresponding author. E-mail: silviarc.lopes@gmail.com}}}}

 \vspace{0.5cm}

 \centerline{Mathematics and Statistics Institute}
 %\skip
 \centerline{Federal University of Rio Grande do Sul}
 %\skip
 \centerline{Porto Alegre, RS, Brazil}

 \vspace{0.5cm}

 \centerline{\today}
%END OF Header first page

\begin{abstract}
\noindent
 We investigate large deviations properties for centered
stationary AR(1) and MA(1) processes with independent Gaussian innovations,
by giving the explicit bivariate rate functions for the sequence of random
vectors $(\boldsymbol{S}_n)_{n \in \N} = \left(n^{-1}(\sum_{k=1}^n X_k,
\sum_{k=1}^n X_k^2)\right)_{n \in \N}$. In the AR(1) case, we also give the
explicit rate function for the bivariate random sequence $(\W_n)_{n \geq 2} =
\left(n^{-1}(\sum_{k=1}^n X_k^2, \sum_{k=2}^n X_k X_{k+1})\right)_{n \geq
 2}$. Via Contraction Principle, we provide explicit rate functions for
the sequences  $(n^{-1} \sum_{k=1}^n X_k)_{n \in \N}$,
$(n^{-1} \sum_{k=1}^n X_k^2)_{n \geq 2}$ and $(n^{-1} \sum_{k=2}^n X_k
X_{k+1})_{n \geq 2}$, as well. In the AR(1) case, we present a new proof for
an already known result on the explicit deviation function for the
Yule-Walker estimator.
\medskip

\noindent
\textbf{Keywords:} Large Deviations; Empirical Autocovariance; Quadratic and
Sample Means; Autoregressive Processes; Moving Average Processes; Yule-Walker
Estimator
\medskip

\noindent
\textbf{2010 Mathematics Subject Classification:} 60F10; 60G10; 60G15; 11E25;
62F12; 62M10
\end{abstract}

% -----
% BEGIN Section 1
% -----
\section{Introduction}

Since the first establishments on the Large Deviations theory, there has been
a great expansion of the number of surveys on Large Deviations
Principles (LDP). Nowadays, we can find a variety of examples applied to the
time series analysis and stochastic processes in general; for instance,
LDPs for Stable laws (see, e.g. Heyde \cite{heyde67}, Rozovskii
\cite{rozovskii89},
Rozovskii \cite{rozovskii99} and Zaigraev \cite{zaigraev99}), stationary
Gaussian processes
(see, e.g. Bercu {\it et al}.\ \cite{bercu00}, Bercu {\it et al}.\
\cite{bercu97}, Bryc and Dembo
\cite{bryc97}, Donsker and Varadhan \cite{donsker85} and
Zani \cite{zani13}), autoregressive and moving average processes
(see, e.g. Bercu \cite{bercu01}, Bryc and Smolenski \cite{bryc93}, Burton and
Dehling \cite{burton90}, Djellout and Guillin
\cite{djellout01}, Macci and Trapani \cite{macci13}, Mas and Menneteau
\cite{mas03}, Miao \cite{miao09} and Wu \cite{wu04})
and continuous processes (see, e.g. Bercu and Richou \cite{bercu15} and
Bercu and Richou \cite{bercu17}).

When considering the \emph{Empirical Autocovariance} function
\begin{equation*}
\tilde\gamma_n(h) = \frac{1}{n} \sum_{k=1}^{n-h} X_k X_{k+h}, \quad  \mbox{for }
0 \leq h \leq d \quad \mbox{and} \quad d \in \N,
\end{equation*}
of a process $(X_n)_{n \in \N}$, few results on LDP are known. Regarding
Gaussian
distributions, one of the first studies in the literature is the one from
Bryc and Smolenski \cite{bryc93}, concerning the LDP for the
\emph{Quadratic Mean}
  \begin{equation*}
   \tilde\gamma_n(0) = \frac{1}{n} \sum_{k=1}^n X_k^2.
  \end{equation*}
Bryc and Dembo \cite{bryc97} showed that an LDP for the
vector $({\tilde\gamma}_{n}(h))_{h=0}^d$ is available when $(X_n)_{n \in \N}$ is
an
independent and identically distributed (i.i.d.)\ process, with $X_n \sim
\mathcal{N}(0,1)$. It is well known that most of the relevant stochastic
processes are not independent and, as the authors have claimed, their approach
needs some adjustments when trying to show that a similar LDP works, for
instance, when dealing with the classical centered stationary Gaussian
AR(1) process (see example 1 in Bryc and Dembo \cite{bryc97}). On the other
hand, Bercu {\it et al}.\ \cite{bercu97} proved an LDP for Toeplitz quadratic
forms of centered stationary Gaussian processes in an univariate setting.
Their
survey eliminated the need for the variables of $(X_n)_{n \in \N}$ to be
independent,
extending the result in Bryc and Dembo \cite{bryc97} by including the AR(1)
process. However, it is not clear if the LDP was available even for the
bivariate random vector
$(\tilde\gamma_{n}(0), \tilde\gamma_{n}(1))$, once the LDP has only been proved
for each one of the components separately. More precisely, the results in
Bercu {\it et al}.\ \cite{bercu97} only cover the LDP of the random variable
\begin{equation*}
 W_n = \frac{1}{n} {X^{(n)}}^T M_n\,X^{(n)},
\end{equation*}
where $X^{(n)} = (X_1, \cdots, X_n)$, with ${X^{(n)}}^T$ denoting the
transpose of $X^{(n)}$, and where $(M_n)_{n \in \N}$ is a sequence of $n \times
n$ Hermitian matrices.

In a more general setting, Carmona {\it et al}.\ \cite{carmona98} present a
level-1 LDP for the empirical autocovariance function of order $h$ for
any innovation processes, that encompasses the AR($d$) process with Gaussian
innovations. In this paper, the authors used the level-2 LDP together with the
Contraction Principle. The process itself is obtained from iterations of a
continuous uniquely ergodic transformation, preserving the Lebesgue measure on
the circle. In Carmona and Lopes \cite{carmona00}, the authors considered a
similar problem where the dynamics are given by an expanding transformation on
the circle. In the same line of research, Wu \cite{wu04} proved an LDP for
$(\tilde\gamma_{n}(h))_{h=0}^d$ under the assumption that $\E(\exp(\lambda
\varepsilon_n^2))$ is finite, for $\lambda > 0$, where $(\varepsilon_n)_{n
\in \N}$ is
the white noise of an AR($d$) process, excluding in turn the Gaussian case.

In the present manuscript, we take into account the studies from
Bercu {\it et al}.\ \cite{bercu97} and Bryc and Dembo \cite{bryc97} to give a
proof that the sequence $(\W_n)_{n
\geq
2}$, given by
\begin{equation*}
  \W_n = \frac{1}{n} \left(\tilde\gamma_n(0),\tilde\gamma_n(1)\right) =
\frac{1}{n}
\left(\sum_{k=1}^n X_k^2, \sum_{k=2}^n X_k X_{k-1}\right), \quad
\mbox{for } n \geq 2,
\end{equation*}
does, in fact, satisfy an LDP when $(X_n)_{n \in \N}$ is a centered stationary
Gaussian AR(1) process and we present its explicit bivariate rate
function. The asymptotical behavior of the sequence
$(\W_n)_{n \geq 2}$ is well known (see Brockwell and Davis \cite{brockwell91}),
that is
 \begin{equation*}
  \W_n \xrightarrow{n \to \infty} \left(\frac{1}{1 - \theta^2},
\frac{\theta }{1 - \theta^2}\right), \quad \mbox{almost surely}.
 \end{equation*}
By definition of almost sure convergence, as $n \to \infty$, the sequence
of probabilities
\begin{equation}\label{prob}
\mathbb{P}\left(\ \left|\left|\W_n - \left(\frac{1}{1-\theta^2} ,
\frac{\theta}{1-\theta^2}\right)\right|\right| > \delta\right),
\end{equation}
converges to zero, for all $\delta > 0$. However, if the
convergence of these probabilities is very slow, even for large $n$, we have a
certain reasonable chance of choosing a bad sample $X_1, \cdots, X_n$ from
$(X_n)_{n \in \N}$, such that $\W_n$ is distant from the true
value $\left(\frac{1}{1-\theta^2} , \frac{\theta}{1-\theta^2}\right)$.

The Large Deviations theory considers the asymptotic behavior of the
probabilities presented in \eqref{prob}, ensuring that they
converge to zero approximately in exponential rate (see chapter 1 in
Bucklew \cite{bucklew90}). Its usual definition is given as follows (see Dembo
and Zeitouni \cite{dembo10}).

 \begin{definition}\label{drate}
  A sequence of random vectors $(\boldsymbol{V}_n)_{n \in \N}$ of $\R^d$, for $d
\in
\N$,
satisfies a \emph{Large Deviation Principle} (LDP) with speed $n$ and
\emph{rate function} $J(\boldsymbol{\cdot})$, if $J(\boldsymbol{\cdot}):\R^d
\rightarrow
[0,
\infty]$ is a
lower semi-continuous function such that,
  \begin{itemize}
   \item Upper bound: for any closed set $F \subset \R^d$,
   \begin{equation*}
    \limsup_{n \to \infty} \frac{1}{n} \log \mathbb{P}\left(\boldsymbol{V}_n \in
F\right) \leq -\inf_{\boldsymbol{x} \in F} J(\boldsymbol{x});
   \end{equation*}
   \item Lower bound: for any open set $G \subset \R^d$,
   \begin{equation*}
    -\inf_{\boldsymbol{x} \in G} J(\boldsymbol{x}) \leq \liminf_{n \to \infty}
\frac{1}{n} \log \mathbb{P}\left(\boldsymbol{V}_n \in F\right).
   \end{equation*}
  \end{itemize}
  Moreover, $J(\boldsymbol{\cdot})$ is said to be a \emph{good rate function} if
its
level sets $J^{-1}([0,b])$ are compact, for all $b \in \R$.
 \end{definition}

\begin{remark}
 In this work, we only deal with good rate functions, but for short, we
sometimes write rate function instead.
\end{remark}

In general, it is not easy to prove that an arbitrary sequence of random
vectors satisfies an LDP (see, e.g.
Bercu and Richou \cite{bercu17}, Bryc and Dembo \cite{bryc97}, Dembo and
Zeitouni \cite{dembo10}, Ellis \cite{ellis85}, Macci and Trapani
\cite{macci13} and Mas and Menneteau \cite{mas03}). An elegant way of proving
such
property is to verify the validity of the G\"{a}rtner-Ellis' theorem conditions
 (see theorem 2.3.6 in Dembo and Zeitouni \cite{dembo10}), which is a
counterpart to the
very well known Cram\'{e}r-Chernoff's theorem (see theorem 2.2.30 in
Dembo and Zeitouni \cite{dembo10}). It is worth mentioning that,
within the conditions of the G\"{a}rtner-Ellis' theorem, little use of the
dependency structure is made and the focus mainly rests in the behavior of the
\emph{limiting cumulant generating function}, defined by
 \begin{equation*}
  L(\boldsymbol{\lambda}) = \lim_{n \to \infty} L_n(\boldsymbol{\lambda}),
\mbox{ for
all } \boldsymbol{\lambda} \in \R^2,
 \end{equation*}
 where $L_n(\boldsymbol{\cdot}):\R^2 \rightarrow \R \cup \{\infty\}$  denotes
the
\emph{normalized cumulant generating function} of $\W_n$,
 \begin{equation*}
  L_n(\boldsymbol{\boldsymbol{\lambda}}) = \frac{1}{n} \log
\mathbb{E}\left[\exp\left(n
\langle\boldsymbol{\lambda}, \W_n\rangle \right)\right].
 \end{equation*}
We shall present an explicit expression for $L(\boldsymbol{\cdot})$ in the case
$\boldsymbol{\lambda} = (\lambda_1, \lambda_2)$ depends on two variables. As a
result,
we obtain the explicit rate function through the Fenchel-Legendre transform of
$L(\boldsymbol{\cdot})$.

In the second part of our study, we shall analyze the LDP of the sequence of
bivariate random vectors $(\boldsymbol{S}_n)_{n \in \N}$, where
\begin{equation*}
 \boldsymbol{S}_n = \frac{1}{n} \left(\sum_{k=1}^n X_k, \sum_{k=1}^n
X_k^2\right).
\end{equation*}
We shall call $\boldsymbol{S}_n$ as the \emph{bivariate SQ-Mean}, for
short, since its first and second components are, respectively, the Sample Mean
and the Quadratic Mean. We dedicate our study to the particular cases
when $(X_n)_{n \in \N}$ follows an AR(1) or an MA(1) process. This study is
based on a particular result presented in Bryc and Dembo \cite{bryc97} and
which has a very
interesting application when the Contraction Principle can be applied.

Our study is organized as follows. Section 2 is dedicated to the
proof of the LDP and computations of the explicit rate function for the random
sequence $(\W_n)_{n \geq 2}$, under the assumption that $(X_n)_{n \in \N}$
follows an AR(1) process. In Section 3, we obtain the LDP for some particular
cases, namely, the Quadratic Mean and the first order Empirical Autocovariance
of a random sample $X_1, \cdots, X_n$ from the AR(1) process. Moreover, the LDP
for the Yule-Walker estimator is provided likewise. As a direct
application of the studies in Section 2, we dedicate Section 4 to show that the
LDP for the SQ-Mean $(\boldsymbol{S}_n)_{n \in \N}$ of an AR(1) process is
available.
Next, we give the details of the LDP for the Quadratic Mean of an MA(1) process
and, as a consequence, the LDP for the SQ-Mean. Section 5 gives insights on
future work and concludes the manuscript.
% ---
% END Section 1
% ---

% -----
% BEGIN Section 2
% -----
\section{LDP and the centered stationary Gaussian AR(1) process}

Consider the autoregressive process $(X_n)_{n \in \N}$ defined by the
equation
 \begin{equation}\label{ar1}
  X_{n+1} = \theta X_n + \varepsilon_{n+1}, \quad \mbox{ for } |\theta| < 1
\mbox{ and } n \in \N,
 \end{equation}
where $(\varepsilon_n)_{n \geq 2}$ is a sequence of i.i.d.\ random variables,
with $\varepsilon_n
\sim \mathcal{N}(0, 1)$, for all $n \geq 2$. Assume that $X_1$ is
independent of $(\varepsilon_n)_{n \geq 2}$, with $\mathcal{N}(0,
1/(1-\theta^2))$ distribution. Then $(X_n)_{n\in\N}$ is a centered stationary
Gaussian AR(1) process with (positive) spectral density function defined as
\begin{equation}\label{spectral}
  g_\theta(\omega) = \frac{1}{1 + \theta^2 - 2 \theta \cos(\omega)}, \quad
\omega \in \T = [-\pi,\pi).
 \end{equation}

Throughout this section, we shall study the existence of an LDP for the
random vector
\begin{equation}\label{Wn}
 \W_n = \frac{1}{n} \left(\sum_{k=1}^n X_k^2, \sum_{k=2}^n X_k X_{k-1}\right).
\end{equation}

Consider $\boldsymbol{\lambda} = (\lambda_1, \lambda_2) \in\R^2$. Let
$L_n(\cdot,\cdot): \R^2 \rightarrow \R$ represent the normalized cumulant
generating function associated to the sequence $(\W_n)_{n \geq 2}$, defined by
\begin{equation}\label{Ln1}
  L_n(\lambda_1, \lambda_2) = \frac{1}{n} \log \E \left( e^{n \langle
(\lambda_1, \lambda_2), \W_n \rangle} \right), \quad \mbox{for } n \geq 2,
\end{equation}
where $\langle (x_1, y_1), (x_2, y_2) \rangle := x_1 x_2 + y_1 y_2$ denotes
the usual inner product in $\R^2$. We want to apply the
G\"{a}rtner-Ellis' theorem, which requires the convergence of
$L_n(\cdot, \cdot)$, as $n \to \infty$.

\subsection{Analysis of the normalized cumulant generating
function}\label{analysisNCG}
We shall present below, the expression for the limiting function
$L(\cdot,\cdot)$, when $n \to \infty$, of the  sequence of functions
$(L_n(\cdot,\cdot))_{n \geq 2}$. In particular, we use the function
$L(\cdot,\cdot)$ by applying the G\"{a}rtner
Ellis' theorem in order to obtain the rate function of the sequence $(\W_n)_{n
\geq 2}$.

With $X^{(n)} = (X_1, \cdots, X_n)$ and ${X^{(n)}}^T$ denoting the
transpose of $X^{(n)}$, note that, one can rewrite \eqref{Wn} as
  \begin{equation}\label{Wnphi}
   \W_n = \frac{1}{n} \left( {X^{(n)}}^T T_n(\varphi_1) X^{(n)}, \ {X^{(n)}}^T
T_n(\varphi_2) X^{(n)} \right),
  \end{equation}
  where $\varphi_1: \T \rightarrow \{1\}$ and $\varphi_2: \T
\rightarrow
[-1,1]$ are real valued functions, given respectively by
  \begin{equation*}
   \varphi_1(\omega) = 1 \qquad \mbox{and} \qquad \varphi_2(\omega) =
\cos(\omega).
  \end{equation*}
  The matrix $T_n(f)$ represents the Toeplitz matrix associated
to the function $f: \T \rightarrow \R$, which is defined by
  \begin{equation*}
   T_n(f) = \left[\frac{1}{2\pi} \int_\T e^{i\,(j-k) \omega} f(\omega) \ d
\omega\right]_{1 \leq j, k \leq n}.
  \end{equation*}

\begin{remark}
 A vast literature comprehending Toeplitz matrices has emerged in the last
century and one of the most famous and referenced works is given in Grenander
and Szeg\"{o} \cite{grenander58}. A modern treatment about this subject may be
found in Gray \cite{gray06} and in Nikolski \cite{nikolski20}.
\end{remark}

  Inserting \eqref{Wnphi} into \eqref{Ln1}, we obtain
  \begin{equation*}
   L_n(\lambda_1, \lambda_2) = \frac{1}{n} \log \E \left( e^{{X^{(n)}}^T
(\lambda_1 T_n(\varphi_1) + \lambda_2 T_n(\varphi_2)) X^{(n)}} \right)
  \end{equation*}
  and, by linearity of Toeplitz matrices, we get
  \begin{equation}\label{Ln2}
   L_n(\lambda_1, \lambda_2) = \frac{1}{n} \log \E \left( e^{{X^{(n)}}^T
T_n(\varphi_{\boldsymbol{\lambda}})
X^{(n)}} \right),
  \end{equation}
  where $\varphi_{\boldsymbol{\lambda}}: \T \rightarrow \R$ is defined by
  \begin{equation}\label{philambda}
   \varphi_{\boldsymbol{\lambda}}(\omega) = \lambda_1 \varphi_1(\omega) +
\lambda_2
\varphi_2(\omega) = \lambda_1 + \lambda_2 \cos(\omega).
  \end{equation}
  Observe that $\varphi_{\boldsymbol{\lambda}}(\cdot)$ depends on the choice of
$\boldsymbol{\lambda} = (\lambda_1, \lambda_2)$ and that
  \begin{equation*}
   T_n(\varphi_{\boldsymbol{\lambda}}) =\frac{1}{2} \left[%
   \begin{array}{cccccc}
     2\lambda_1 & \lambda_2 & 0 & 0 & \cdots & 0 \\
     \lambda_2 & 2\lambda_1 & \lambda_2 & 0 & \ddots & \vdots \\
     0 & \lambda_2 & 2 \lambda_1 & \lambda_2 & \ddots & 0 \\
     0 & 0 & \lambda_2 & \ddots & \ddots & 0 \\
     \vdots & \ddots & \ddots & \ddots & 2 \lambda_1 & \lambda_2\\
     0 & \cdots & 0 & 0 & \lambda_2 & 2\lambda_1
   \end{array}\right].
  \end{equation*}

  The fact that $X^{(n)}$ has multivariate Gaussian distribution gives us some
advantage here. A standard result from Probability theory (see section B.6 in
Bickel and Doksum \cite{bickel01}) shows that there is always a multivariate
Gaussian vector
$Y^{(n)} = (Y_{n,1}, \cdots, Y_{n,n})$ with independent components, such that
  \begin{equation}\label{trick}
   X^{(n)} = T_n(g_\theta)^{1/2} \, Y^{(n)},
  \end{equation}
where $g_\theta(\cdot)$ is given in \eqref{spectral}
and $T_n(g_\theta)^{1/2}$ is the square root matrix of $T_n(g_\theta)$. We also
note that $(T_n(g_\theta))_{n \in \N}$ is the sequence of autocovariance
matrices associated to the process $(X_n)_{n \in \N}$. Therefore,
since $T_n(g_\theta)$ is a positive definite matrix, the sequence of matrices
$(T_n(g_\theta)^{1/2})_{n \in \N}$ is well defined.

  From \eqref{trick} we obtain
  \begin{equation}\label{trick2}
   {X^{(n)}}^T  T_n(\varphi_{\boldsymbol{\lambda}})\,X^{(n)} = {Y^{(n)}}^T
T_n(g_\theta)^{1/2}\, T_n(\varphi_{\boldsymbol{\lambda}})\,
T_n(g_\theta)^{1/2}\,Y^{(n)}.
  \end{equation}
  Since $T_n(g_\theta)^{1/2}\, T_n(\varphi_{\boldsymbol{\lambda}})\,
T_n(g_\theta)^{1/2}$ is a real symmetric matrix, there exists a sequence of
orthogonal matrices $(P_n)_{n \in \N}$ such that
  \begin{equation}\label{trick3}
   T_n(g_\theta)^{1/2}\, T_n(\varphi_{\boldsymbol{\lambda}})\,
T_n(g_\theta)^{1/2} =
P_n \,\Lambda_n\, P_n^T,
  \end{equation}
  with $\Lambda_n = \mbox{Diag}(\alpha_{n,1}^{\boldsymbol{\lambda}}, \cdots,
\alpha_{n,n}^{\boldsymbol{\lambda}})$ a diagonal $n \times n$ matrix, where
$(\alpha_{n,k}^{\boldsymbol{\lambda}})_{k=1}^n$ are the eigenvalues of
 \begin{equation*}
  T_n^{1/2}(g_\theta) T_n(\varphi_{\boldsymbol{\lambda}}) T_n^{1/2}(g_\theta).
 \end{equation*}

\begin{remark}
 It is interesting to note that $(\alpha_{n,k}^{\boldsymbol{\lambda}})_{k=1}^n$
are
also
the eigenvalues of $T_n(\varphi_{\boldsymbol{\lambda}})\,T_n(g_\theta)$.
\end{remark}

  From \eqref{trick2} and \eqref{trick3} we obtain
  \begin{equation}\label{trick4}
   {Y^{(n)}}^T T_n(g_\theta)^{1/2}\, T_n(\varphi_{\boldsymbol{\lambda}})\,
T_n(g_\theta)^{1/2}\,Y^{(n)} = {Y^{(n)}}^T P_n \, \Lambda_n \, P_n^T \,Y^{(n)}.
  \end{equation}

  As $P_n$ is orthogonal, the product $P_n^T \, Y^{(n)}$ has a
multivariate Gaussian distribution with independent components. From
\eqref{trick2} and \eqref{trick4}, it is easy to conclude that
  \begin{equation}\label{orth}
   {X^{(n)}}^T  T_n(\varphi_{\boldsymbol{\lambda}})\,X^{(n)} =
\sum_{k=1}^n \alpha_{n,k}^{\boldsymbol{\lambda}} Z_{n,k},
  \end{equation}
  where $Z_{1,n}, \cdots, Z_{n,n}$ are i.i.d.\ random variables, each one
having a $\chi^2_1$ distribution with moment generating function given by
  \begin{equation}\label{mom}
   M_{Z_{n,k}}(t) = \E(e^{t Z_{n,k}}) =
   \begin{cases}
   \displaystyle \frac{1}{(1 - 2 t)^{1/2}},& t < \frac{1}{2},\\
    \hspace{7.5mm} \infty, & t \geq \frac{1}{2},
   \end{cases}
  \end{equation}
  for  $k = 1, \cdots, n$.

  Returning to the analysis of \eqref{Ln2} and considering \eqref{orth}, as
$Z_{1,n}, \cdots, Z_{n,n}$ are mutually independent, we
conclude that
  \begin{equation}\label{Ln3}
   L_n(\lambda_1, \lambda_2) = \frac{1}{n} \log \E \left(e^{\sum_{k=1}^n
\alpha_{n,k}^{\boldsymbol{\lambda}} Z_{n,k}}\right) = \frac{1}{n} \log
\left(\prod_{k=1}^n \E \left( e^{\alpha_{n,k}^{\boldsymbol{\lambda}}
Z_{n,k}}\right)\right).
  \end{equation}
  From \eqref{mom}, we observe that $\E \left(
e^{\alpha_{n,k}^{\boldsymbol{\lambda}} Z_{n,k}}\right)$ is only defined if each
one
of
the $\alpha_{n,k}^{\boldsymbol{\lambda}} < 1/2$. In other words, \eqref{Ln3} is
finite if
  \begin{equation}\label{condalpha}
   0 < 1 - 2 \alpha_{n,k}^{\boldsymbol{\lambda}}, \quad \mbox{for all $k$ such
that}
\ 1 \leq k \leq n.
  \end{equation}

  The condition in \eqref{condalpha} is equivalent to requiring that $I_n
- 2\,T_n(\varphi_{\boldsymbol{\lambda}}) T_n(g_\theta)$ must be positive
definite
(see Bercu {\it et al}.\ \cite{bercu97}). Since $T_n(g_\theta)$ is a positive
definite matrix and
  \begin{equation*}
   I_n - 2\,T_n(\varphi_{\boldsymbol{\lambda}}) T_n(g_\theta) =
(T_n^{-1}(g_\theta)
- 2
T_n(\varphi_{\boldsymbol{\lambda}})) \ T_n(g_\theta),
  \end{equation*}
  it is sufficient to show that
  \begin{equation}\label{Dnlambda}
   D_{n,\boldsymbol{\lambda}} = T_n^{-1}(g_\theta) - 2
T_n(\varphi_{\boldsymbol{\lambda}}) =
\left(\begin{array}{ccccc}
        r_1   & q    & 0      & \cdots & 0\\
        q    & p      & \ddots & \ddots & \vdots\\
        0      & \ddots & \ddots & \ddots & 0\\
        \vdots & \ddots & \ddots & p      & q\\
        0      & \cdots & 0 & q    & r_1
       \end{array} \right)
  \end{equation}
  is positive definite, where $r_1 = 1 - 2 \lambda_1$, $ p = 1 +
\theta^2 - 2\lambda_1$ and $q = - \theta - \lambda_2$. The domain
$\mathcal{D} \subseteq \R^2$, where $D_{n,\boldsymbol{\lambda}}$ (and so $I_n -
2
T_n(\varphi_{\boldsymbol{\lambda}}) T_n(g_\theta)$) is positive definite, is
given
by
the following lemma.

 \begin{lemma}\label{domD}
  If the pair $(\lambda_1, \lambda_2)$ belongs to the domain $\mathcal{D} =
\mathcal{D}_1 \cup \mathcal{D}_2$, where
  \begin{equation}\label{D12}
   \begin{split}
   &\mathcal{D}_1 = \left\{(\lambda_1, \lambda_2) \in \R^2 \, \big| \, \lambda_1
\leq \frac{1 - \theta^2}{2}, \ 4 (\theta + \lambda_2)^2 < (1 + \theta^2 -
2\lambda_1)^2 \right\},\\
   &\mathcal{D}_2 = \left\{(\lambda_1, \lambda_2) \in \R^2 \, \big| \, \frac{1 -
\theta^2}{2} < \lambda_1 < \frac{1}{2}, \ (\theta + \lambda_2)^2 <  \theta^2(1 -
2 \lambda_1)\right\},
   \end{split}
  \end{equation}
  then, for $n$ large enough, the tridiagonal matrix
$D_{n,\boldsymbol{\lambda}}$,
given
in \eqref{Dnlambda}, is positive definite.
 \end{lemma}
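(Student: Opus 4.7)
The plan is to verify Sylvester's criterion: every leading principal minor of $D_{n,\boldsymbol{\lambda}}$ should be positive. Denote by $\delta_k$ the determinant of the order-$k$ symmetric tridiagonal Toeplitz matrix with $p = 1 + \theta^2 - 2\lambda_1$ on the diagonal and $q = -\theta - \lambda_2$ on the sub- and super-diagonals, obeying the recurrence $\delta_k = p\,\delta_{k-1} - q^2\,\delta_{k-2}$ with $\delta_0 = 1$ and $\delta_1 = p$. Cofactor expansion along the first row shows that for $1 \leq k \leq n-1$ the $k$-th leading minor of $D_{n,\boldsymbol{\lambda}}$ equals $\Delta_k = r_1\,\delta_{k-1} - q^2\,\delta_{k-2}$, and a further expansion along the last row yields $\tilde\Delta_n := \det D_{n,\boldsymbol{\lambda}} = r_1\,\Delta_{n-1} - q^2\,\Delta_{n-2}$, reducing the problem to a purely algebraic sign analysis.

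I would then verify that $p > 2|q| > 0$ everywhere on $\mathcal{D} = \mathcal{D}_1 \cup \mathcal{D}_2$. On $\mathcal{D}_1$ this is the defining inequality. On $\mathcal{D}_2$, writing $u = 1 - 2\lambda_1 > 0$ and $v = \theta^2 \geq 0$ so that $p = u + v$, the elementary AM-GM bound $(u+v)^2 \geq 4uv$ together with $4q^2 < 4\theta^2(1 - 2\lambda_1) = 4uv$ yields $p^2 > 4q^2$, while $\lambda_1 < 1/2$ forces $p > \theta^2 \geq 0$. The characteristic polynomial $t^2 - p\,t + q^2$ therefore has two distinct roots $t_\pm = (p \pm \sqrt{p^2 - 4q^2})/2$ satisfying $t_+ > t_- \geq 0$, $t_+ + t_- = p$, and $t_+\,t_- = q^2$, and solving the recurrence produces the Chebyshev-type closed form $\delta_k = (t_+^{k+1} - t_-^{k+1})/(t_+ - t_-)$. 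A direct substitution using $q^2 = t_+ t_-$ then yields the explicit expressions
\[
\Delta_k = \frac{(r_1 - t_-)\, t_+^k \,-\, (r_1 - t_+)\, t_-^k}{t_+ - t_-}, \qquad \tilde\Delta_n = \frac{(r_1 - t_-)^2\, t_+^{n-1} \,-\, (r_1 - t_+)^2\, t_-^{n-1}}{t_+ - t_-}.
\]

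The decisive step is to establish $r_1 > t_-$ everywhere on $\mathcal{D}$. Using the algebraic identity $p - r_1 = \theta^2$, the inequality $r_1 > t_-$ is equivalent to $(p - 2r_1)^2 < p^2 - 4q^2$, which upon expansion simplifies to $q^2 < \theta^2\,r_1$ whenever $p - 2r_1 > 0$: this is exactly the defining inequality of $\mathcal{D}_2$. On $\mathcal{D}_1$, where $\lambda_1 \leq (1-\theta^2)/2$ forces $p - 2r_1 \leq 0$, one obtains $r_1 \geq p/2 \geq t_-$ at once, and the possibility $r_1 = t_-$ would require simultaneously $p = 2r_1$ and $p^2 = 4q^2$, contradicting $4q^2 < p^2$. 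With $r_1 > t_- \geq 0$ in hand, a short case analysis on whether $r_1 \leq t_+$ (both numerator terms nonnegative) or $r_1 > t_+$ (handled by the elementary bound $(r_1 - t_-)t_+^k - (r_1 - t_+)t_-^k \geq (t_+ - t_-)t_+^k$) shows $\Delta_k > 0$ for every $1 \leq k \leq n-1$. Finally, the leading term $(r_1 - t_-)^2\, t_+^{n-1}$ in $\tilde\Delta_n$ is strictly positive and grows like $t_+^{n-1}$, which for $n$ sufficiently large dominates the subtracted term of order $t_-^{n-1}$ since $t_+ > t_-$, and Sylvester's criterion then delivers positive definiteness.

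The main obstacle I anticipate is the unified treatment of the two domains $\mathcal{D}_1$ and $\mathcal{D}_2$: their defining inequalities look structurally quite different, but the key insight is that both precisely encode the single condition $r_1 > t_-$ at the level of the characteristic roots, with the threshold $\lambda_1 = (1 - \theta^2)/2$ — equivalently $p = 2r_1$ — marking the exact point where the analysis pivots between the two regimes. A secondary delicate point is the qualifier \emph{for $n$ large enough} in the conclusion, which is genuinely necessary since for small $n$ the subtracted term $(r_1 - t_+)^2\, t_-^{n-1}$ may exceed the leading one in $\tilde\Delta_n$; large $n$ is used only at this final asymptotic step, while the intermediate minors $\Delta_k$ are positive for every $k$.
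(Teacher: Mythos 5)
Your proof is correct, and it reaches the paper's conclusion by the same overall strategy (Sylvester's criterion applied to the leading minors of the tridiagonal matrix, with the same quantities $p$, $q$, $r_1$ and the same pivotal inequality) but through a genuinely different computational device. The paper writes the $k$-th minor as a product $r_k r_{k-1}\cdots r_1$ with $r_k = G^{k-1}(r_1)$ for the M\"obius-type map $G(a) = p - q^2/a$, and then argues dynamically: the fixed points of $G$ are exactly your roots $t_\pm$ of $t^2 - pt + q^2$ (the paper calls them $Q$ and $R$), $Q=t_+$ is attracting and $R=t_-$ is repelling, so all iterates stay positive precisely when $r_1 > R$ --- which is your condition $r_1 > t_-$, and both texts show it is equivalent to $q^2 < \theta^2 r_1$ when $p < 2\theta^2$ and automatic when $p \geq 2\theta^2$, i.e.\ exactly the split into $\mathcal{D}_2$ and $\mathcal{D}_1$. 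You instead solve the linear three-term recurrence in closed form, which buys you explicit formulas for $\Delta_k$ and $\det D_{n,\boldsymbol{\lambda}}$ and makes the sign analysis fully elementary; in particular the paper's dynamical assertions (``every point greater than $R$ converges towards $Q$'' and ``$\forall x < R\ \exists n_0:\ G^{n_0}(x)<0$'') are stated without detailed proof, whereas your Chebyshev-type expressions render them unnecessary. Both arguments localize the ``$n$ large enough'' qualifier in the same place: the final minor, which carries the corner entry $r_1$ at position $(n,n)$ (the paper needs $G^{n-1}(r_1) > \theta^2$ eventually; you need $t_+^{n-1}$ to dominate $t_-^{n-1}$). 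The only blemish is your passing claim that $p > 2|q| > 0$ on all of $\mathcal{D}$, which fails on the line $\lambda_2 = -\theta$ where $q = 0$; this is harmless since there $t_-=0$, your formulas degenerate gracefully and the matrix is diagonal with positive entries (the paper dismisses this case explicitly for the same reason).
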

 \begin{proof}
  The proof is given in Appendix \ref{appenA}.
 \end{proof}

 To illustrate the domains presented in Lemma \ref{domD}, Figures \ref{domainD1}
and \ref{domainD3} show the graphs of $\mathcal{D}$ when $\theta = 0.9$. In
particular, Figure \ref{domainD1} shows the sets $\mathcal{D}_1$ and
$\mathcal{D}_2$ separately, while Figure \ref{domainD3} shows the union
$\mathcal{D} = \mathcal{D}_1 \cup \mathcal{D}_2$.

 \begin{figure}[ht!]
  \begin{center}
   \includegraphics[scale =0.25]{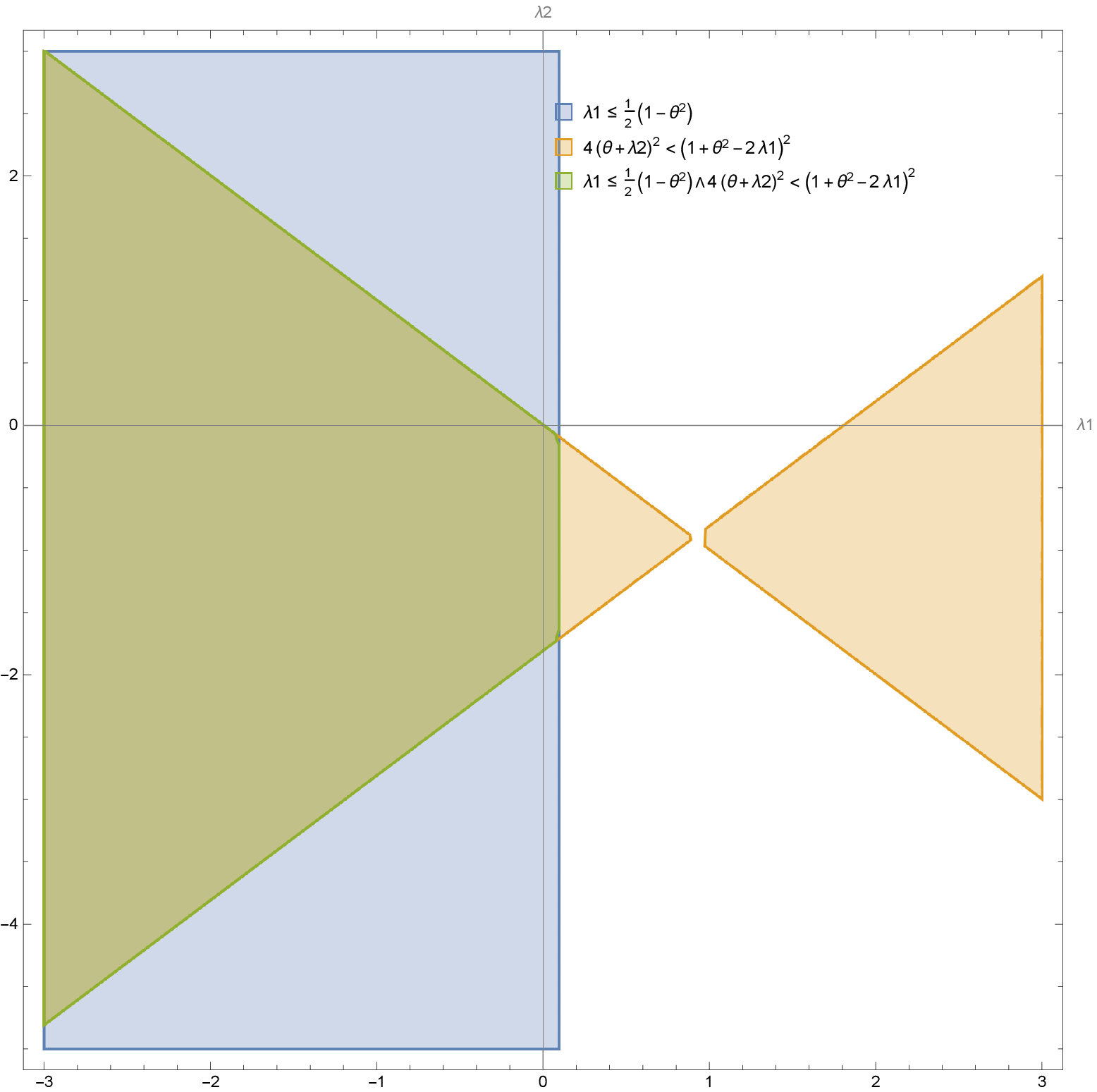} \hspace{15mm}
   \includegraphics[scale =0.25]{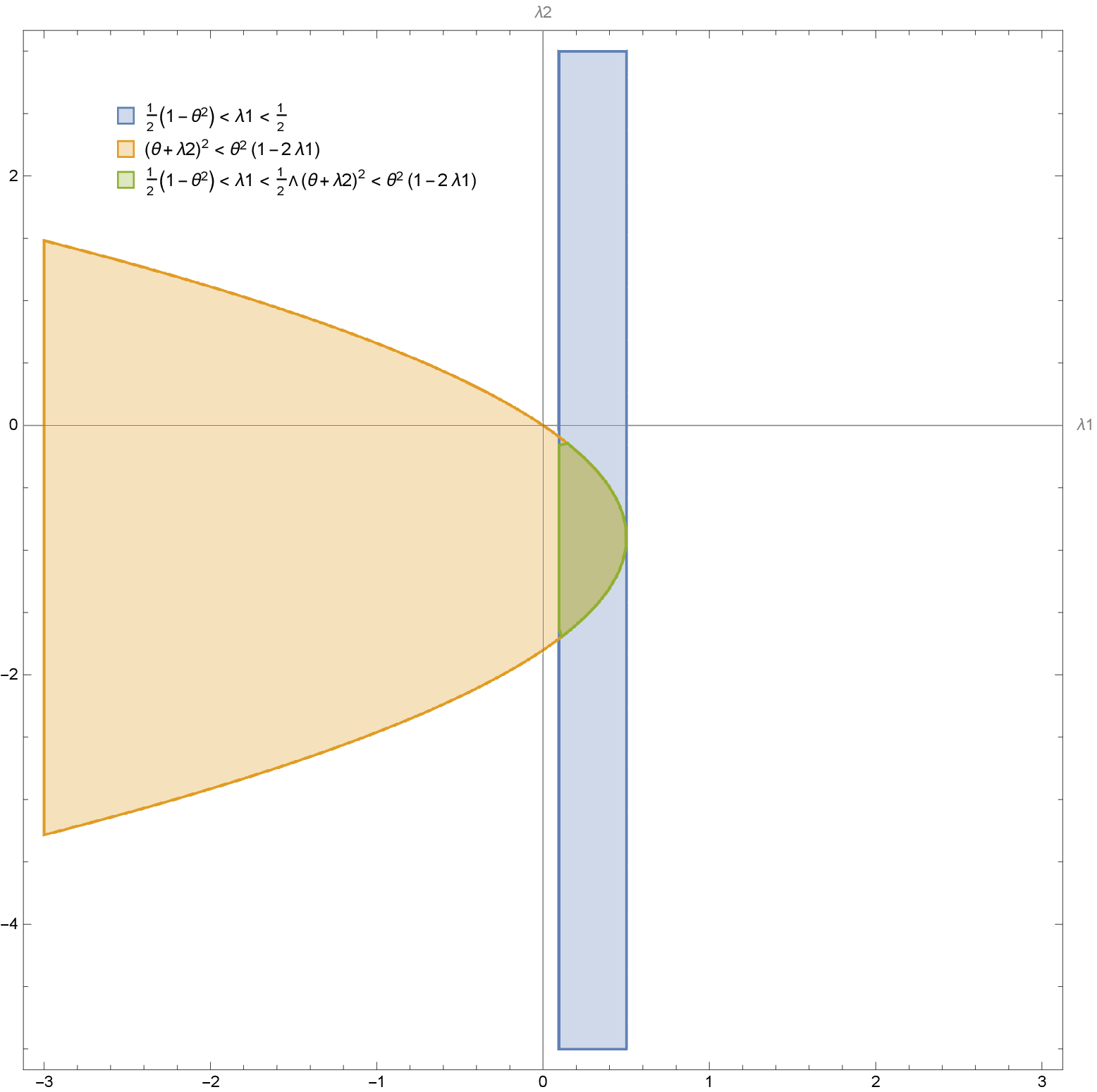}\\
   \caption{Regions $\mathcal{D}_1$ (in the left) and $\mathcal{D}_2$
(in the right) defined in \eqref{D12}
in the particular case when $\theta = 0.9$.\label{domainD1}}
   \end{center}
 \end{figure}

 \begin{figure}[ht!]
   \centering
   \includegraphics[scale = 0.5]{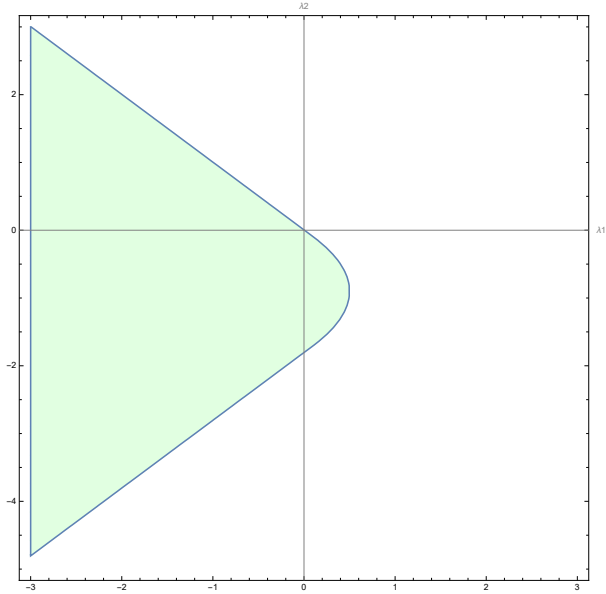}
   \caption{Region $\mathcal{D} = \mathcal{D}_1 \cup \mathcal{D}_2$, when
$\theta = 0.9$. \label{domainD3}}
 \end{figure}

  The knowledge of the domain where the matrix $D_{n, \boldsymbol{\lambda}}$ is
positive definite, allows one to give continuity to the computations of
$L_n(\cdot,\cdot)$ and its limiting function when $n \to \infty$. It is shown
in Bryc and Dembo \cite{bryc97} (see page 330), for the special case $\theta =
0$, that
  \begin{equation*}
   \lim_{n \to \infty} L_n(\lambda_1, \lambda_2) =
   \begin{cases}
    -\frac{1}{2}\log\left(\frac{1 - 2 \lambda_1 + \sqrt{(1 - 2 \lambda_1)^2 - 4
\lambda_2^2}}{2}\right),& \mbox{if } (\lambda_1,\lambda_2) \in
\mathcal{D}_\varphi,\\
    \hspace{3mm} \infty,& \mbox{otherwise},
   \end{cases}
  \end{equation*}
  where
  \begin{equation*}
   \mathcal{D}_\varphi = \{(\lambda_1, \lambda_2) \in \R^2 \,|\, \sup_{\omega
\in \T} \varphi_{\boldsymbol{\lambda}}(\omega) < 1/2\}.
  \end{equation*}
  Even though representing a particular degenerate case, it is important to
note such result. If $\theta = 0$, the process $(X_n)_{n \in \N}$ in \eqref{ar1}
reduces itself to an i.i.d.\ sequence of random variables with
standard Gaussian distribution. We shall generalize the result in
Bryc and Dembo \cite{bryc97} on a bivariate setting, for the case when $\theta
\neq 0$.

  \begin{lemma}\label{lemma2}
   Let $L_n(\cdot,\cdot)$ denote the normalized cumulant generating function of
$(\W_n)_{n \geq 2}$, then
   \begin{equation}\label{convLnL}
    \lim_{n \to \infty} L_n(\lambda_1, \lambda_2) = L(\lambda_1, \lambda_2),
   \end{equation}
   where $L:\R^2 \rightarrow \R \cup \{\infty\}$ is defined by
   \begin{equation}\label{ar1lcgf}
    L(\lambda_1, \lambda_2) =
    \begin{cases}
     \displaystyle -\frac{1}{2} \log \left(\frac{1 + \theta^2 - 2\lambda_1 +
\sqrt{(1 + \theta^2 - 2\lambda_1)^2 - 4 (\theta + \lambda_2)^2}}{2}\right),&
\mbox{ for } (\lambda_1, \lambda_2) \in \mathcal{D},\\
     \hspace{3mm} \infty, & \mbox{otherwise,}
    \end{cases}
   \end{equation}
   with the domain $\mathcal{D}$ given in Lemma \ref{domD}.
  \end{lemma}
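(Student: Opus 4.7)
The plan is to start from the explicit diagonalization achieved in equations \eqref{orth}--\eqref{Ln3}, evaluate the resulting sum via a Szeg\H{o}-type limit theorem for products of Toeplitz matrices, and finally reduce the limiting integral to the stated closed form through a classical Fourier identity.

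First, I would use the moment generating function of the $\chi_1^2$ distribution given in \eqref{mom}: assuming $(\lambda_1,\lambda_2)\in\mathcal{D}$, Lemma \ref{domD} guarantees that for all $n$ large enough, $1-2\alpha_{n,k}^{\boldsymbol{\lambda}}>0$ for every $k$. Combining this with \eqref{Ln3} gives the clean identity
\begin{equation*}
L_n(\lambda_1,\lambda_2) \;=\; -\frac{1}{2n}\sum_{k=1}^n \log\bigl(1-2\alpha_{n,k}^{\boldsymbol{\lambda}}\bigr),
\end{equation*}
where $(\alpha_{n,k}^{\boldsymbol{\lambda}})_{k=1}^n$ are the eigenvalues of $T_n(\varphi_{\boldsymbol{\lambda}})T_n(g_\theta)$.

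The key step would then be to invoke a Szeg\H{o}-type limit theorem for products of Toeplitz matrices (in the style of Avram, Parter or Tyrtyshnikov; see the references in Grenander and Szeg\"{o} \cite{grenander58} or Gray \cite{gray06}) to conclude that, for any continuous function $F$ such that $F(\alpha_{n,k}^{\boldsymbol{\lambda}})$ stays uniformly bounded,
\begin{equation*}
\lim_{n\to\infty} \frac{1}{n}\sum_{k=1}^n F\bigl(\alpha_{n,k}^{\boldsymbol{\lambda}}\bigr) \;=\; \frac{1}{2\pi}\int_{-\pi}^{\pi} F\bigl(\varphi_{\boldsymbol{\lambda}}(\omega)\,g_\theta(\omega)\bigr)\,d\omega.
\end{equation*}
Applying this with $F(x)=\log(1-2x)$ yields
\begin{equation*}
L(\lambda_1,\lambda_2) \;=\; -\frac{1}{4\pi}\int_{-\pi}^{\pi}\log\!\left(1 - 2\,\frac{\lambda_1+\lambda_2\cos\omega}{1+\theta^2-2\theta\cos\omega}\right) d\omega.
\end{equation*}
After algebraic simplification, the integrand splits into $\log\bigl[(1+\theta^2-2\lambda_1)-2(\theta+\lambda_2)\cos\omega\bigr] - \log\bigl[1+\theta^2-2\theta\cos\omega\bigr]$.

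Finally, I would invoke the classical identity $\tfrac{1}{2\pi}\int_{-\pi}^{\pi}\log(a-2b\cos\omega)\,d\omega = \log\!\bigl((a+\sqrt{a^2-4b^2})/2\bigr)$, valid whenever $a>2|b|>0$, which follows from factoring $a-2b\cos\omega=C|1-re^{i\omega}|^2$ with $|r|<1$ and using $\tfrac{1}{2\pi}\int\log|1-re^{i\omega}|^2 d\omega=0$. For the second term one takes $a=1+\theta^2$, $b=\theta$, which produces $\log 1 = 0$ since $|\theta|<1$; for the first, with $a=1+\theta^2-2\lambda_1$ and $b=\theta+\lambda_2$, the hypothesis $a>2|b|$ is exactly the positivity condition defining $\mathcal{D}$. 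This yields the announced formula \eqref{ar1lcgf}.

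The main obstacle I anticipate is the rigorous application of the Szeg\H{o}-type theorem. The matrix $T_n(\varphi_{\boldsymbol{\lambda}})T_n(g_\theta)$ is not itself a Toeplitz matrix, and $\varphi_{\boldsymbol{\lambda}}$ may take negative values and straddle the critical threshold $1/2$ on $\operatorname{supp}(g_\theta)$, so care is needed to ensure that the trace asymptotics and the applicability of the continuous functional calculus with $F(x)=\log(1-2x)$ hold uniformly in $n$. The positive definiteness of $D_{n,\boldsymbol{\lambda}}$ established in Lemma \ref{domD} should provide the uniform spectral bound $\sup_{n,k}\alpha_{n,k}^{\boldsymbol{\lambda}}<1/2$ required to justify the interchange of limit and sum.
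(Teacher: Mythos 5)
Your proposal follows essentially the same route as the paper: reduce $L_n$ to $-\frac{1}{2n}\sum_k\log(1-2\alpha_{n,k}^{\boldsymbol{\lambda}})$ via the $\chi^2_1$ moment generating function, pass to the limit with a Szeg\H{o}-type eigenvalue-distribution theorem for the product $T_n(\varphi_{\boldsymbol{\lambda}})T_n(g_\theta)$ (the paper uses Tyrtyshnikov's theorem 5.1, justified by the Avram bound $|\alpha_{n,k}^{\boldsymbol{\lambda}}|\leq\|\varphi_{\boldsymbol{\lambda}}\|_\infty\|g_\theta\|_\infty$ together with an explicit computation showing the essential supremum of $\varphi_{\boldsymbol{\lambda}}g_\theta$ stays below $1/2$ on $\mathcal{D}$), and evaluate the resulting integral with the classical identity $\frac{1}{2\pi}\int_{\T}\log(a-2b\cos\omega)\,d\omega=\log((a+\sqrt{a^2-4b^2})/2)$, which is exactly Gradshteyn--Ryzhik 4.224(9) as cited in the paper. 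The one omission is that you never treat $(\lambda_1,\lambda_2)\notin\mathcal{D}$: the lemma also asserts $L_n\to\infty$ there, which the paper handles by extracting a subsequence of maximal eigenvalues $\alpha_{n_j,n_j}^{\boldsymbol{\lambda}}\geq 1/2$ along which the moment generating function, and hence $L_n$, is infinite; you should add this short argument to cover the second branch of \eqref{ar1lcgf}.
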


  \begin{proof}
   Let $(\alpha_{n,k}^{\boldsymbol{\lambda}})_{k=1}^n$ represent the sequence
of
eigenvalues of $T_n(\varphi_{\boldsymbol{\lambda}})\,T_n(g_\theta)$, with
$g_\theta(\cdot)$ denoting the spectral density function, defined in
\eqref{spectral}, and $\varphi_{\boldsymbol{\lambda}}(\cdot)$ the function
given in
\eqref{philambda}. If $(\lambda_1, \lambda_2) \in \mathcal{D}$, Lemma
\ref{domD} guarantees that $\alpha_{n,k}^{\boldsymbol{\lambda}} < 1/2$, for all
$1
\leq k \leq n$ and $n$ large enough. Then, from \eqref{mom} and \eqref{Ln3} it
follows that
 \begin{equation}\label{Ln4}
   L_n(\lambda_1, \lambda_2) =  -\frac{1}{2n} \sum_{k=1}^n \log\left(1 - 2
\alpha_{n,k}^{\boldsymbol{\lambda}}\right),
\quad \mbox{for } (\lambda_1, \lambda_2) \in \mathcal{D} \mbox{ and $n$ large
enough}.
   \end{equation}

Nonetheless, if $(\lambda_1, \lambda_2) \notin \mathcal{D}$ and
$(\alpha_{n,n}^{\boldsymbol{\lambda}})_{n \in \N}$ represents the sequence of
maximum eigenvalues of $T_n(\varphi_{\boldsymbol{\lambda}})\,T_n(g_\theta)$, we
can always find a subsequence $(\alpha_{n_j,n_j}^{\boldsymbol{\lambda}})_{j \in
\N}$ of
$(\alpha_{n,n}^{\boldsymbol{\lambda}})_{n \in \N}$ such that
$\alpha_{n_j,n_j}^{\boldsymbol{\lambda}} \geq 1/2$, for all $j \in \N$. In that
case, we
have $M_{Z_{n_j,n_j}}(\alpha_{n_j,n_j}^{\boldsymbol{\lambda}}) = \infty$, for
all $j
\in \N$, implying that $\lim_{n\to\infty} L_n(\lambda_1,\lambda_2) = \infty$.
Henceforth, we only need to take care when $(\lambda_1, \lambda_2)$ belongs to
$\mathcal{D}$, because in this case, $L_n(\lambda_1,\lambda_2)$ is finite for
$n$ large enough and it is given by \eqref{Ln4}.

Consider in what follows the measure space $L^\infty(\T) := L^\infty(\T,
\mathcal{B}(\T), \nu)$, were $\nu(\cdot)$ is the Lebesgue measure acting on
$\mathcal{B}(\T)$, the  Borel $\sigma$-algebra over $\T$. If $h \in
L^\infty(\T)$, the usual norm $||h||_\infty = $
\begin{equation*}
\inf\{h^{*\boldsymbol{S}(N)}\, | \, N \in
\mathcal{B}(\mathbb{T}),\, \nu(N) = 0\}, \ \mbox{ with } \
h^{*\boldsymbol{S}(N)} = \sup\{|h(x)|\,:\, x \notin N\},
\end{equation*}
shall be considered. Since $\varphi_{\boldsymbol{\lambda}}, \,g_\theta \in
L^\infty(\T)$, it is straightforward to show that (see Avram \cite{avram88})
\begin{equation}\label{bounded}
 |\alpha_{n, k}^{\boldsymbol{\lambda}}| \leq
||\varphi_{\boldsymbol{\lambda}}||_\infty
 ||g_\theta||_\infty, \quad \mbox{for all } 1 \leq k \leq n \mbox{ and } n \in
\N.
\end{equation}

   Let $m_{\varphi_{\boldsymbol{\lambda}} \, g_\theta}$ and
$M_{\varphi_{\boldsymbol{\lambda}}
\, g_\theta}$ denote, respectively, the \emph{essential infimum} and
\emph{essential suppremum} (see Grenander and Szeg\"{o} \cite{grenander58}) of
the continuous
mapping $\varphi_{\boldsymbol{\lambda}}\,g_\theta: \T \rightarrow \R$,
belonging to
$L^\infty(\T)$ and defined by
   \begin{equation}\label{quotient}
    (\varphi_{\boldsymbol{\lambda}}\,g_\theta)(\omega) =
\varphi_{\boldsymbol{\lambda}}(\omega)\, g_\theta(\omega) =
\frac{\lambda_1+\lambda_2
\cos(\omega)}{1+\theta^2-2\theta\cos(\omega)}.
   \end{equation}
   The function $(\varphi_{\boldsymbol{\lambda}}\,g_\theta)(\cdot)$ is
continuous
and bounded in $[-\pi, \pi]$, hence it attains a maximum and a minimum in that
interval. It follows that
   \begin{equation*}
    m_{\varphi_{\boldsymbol{\lambda}} \, g_\theta} = \min_{\omega \in
\T}\{(\varphi_{\boldsymbol{\lambda}} \, g_\theta)(\omega)\} \quad  \mbox{and}
\quad
M_{\varphi_{\boldsymbol{\lambda}} \, g_\theta}=\max_{\omega \in
\T}\{(\varphi_{\boldsymbol{\lambda}} \, g_\theta)(\omega)\}.
   \end{equation*}
   Since
   \begin{equation*}
    \frac{d}{d \omega} (\varphi_{\boldsymbol{\lambda}}\,g_\theta)(\omega) =
-\frac{\lambda_2 \sin(\omega)}{1+\theta^2-2\theta \cos(\omega)} -
\frac{2\theta(\lambda_1+\lambda_2
\cos(\omega))\sin(\omega)}{(1+\theta^2-2\theta\cos(\omega))^2},
   \end{equation*}
   we notice that $(\varphi_{\boldsymbol{\lambda}}\,g_\theta)(\omega)$ has two
critical
points at $\omega=-\pi$ and $\omega=0$. Moreover,
   \begin{itemize}
    \item if $\lambda_2 < -2\theta \lambda_1/(1+\theta^2)$, then $\frac{d^2}{d
\omega^2} (\varphi_{\boldsymbol{\lambda}}\,g_\theta)(-\pi) < 0$ and
$\frac{d^2}{d
\omega^2} (\varphi_{\boldsymbol{\lambda}}\,g_\theta)(0) > 0$;
    \item if $\lambda_2 > -2\theta \lambda_1/(1+\theta^2)$, then $\frac{d^2}{d
\omega^2} (\varphi_{\boldsymbol{\lambda}}\,g_\theta)(-\pi) > 0$ and
$\frac{d^2}{d
\omega^2} (\varphi_{\boldsymbol{\lambda}}\,g_\theta)(0) < 0$;
    \item if $\lambda_2 = -2\theta \lambda_1/(1+\theta^2)$, then
$(\varphi_{\boldsymbol{\lambda}}\, g_\theta)(\omega) = \lambda_1$ is constant.
   \end{itemize}
   Therefore, since
   \begin{equation*}
    (\varphi_{\boldsymbol{\lambda}}\, g_\theta)(-\pi) =
\frac{\lambda_1-\lambda_2}{1+\theta^2+2\theta} \quad \mbox{and} \quad
(\varphi_{\boldsymbol{\lambda}}\, g_\theta)(0) =
\frac{\lambda_1+\lambda_2}{1+\theta^2-2\theta},
   \end{equation*}
   we conclude that
   \begin{equation*}
    m_{\varphi_{\boldsymbol{\lambda}} \, g_\theta} =
    \begin{cases}
     \frac{\lambda_1 + \lambda_2}{1+ \theta^2 - 2 \theta},& \mbox{if } \lambda_2
< -\frac{2 \theta \lambda_1}{1+\theta^2},\\
     \frac{\lambda_1 - \lambda_2}{1+ \theta^2 + 2 \theta},& \mbox{if } \lambda_2
\geq -\frac{2 \theta \lambda_1}{1+\theta^2},
    \end{cases}
     \quad \mbox{and} \quad
    M_{\varphi_{\boldsymbol{\lambda}} \, g_\theta} =
    \begin{cases}
     \frac{\lambda_1 - \lambda_2}{1+ \theta^2 + 2 \theta},& \mbox{if } \lambda_2
< -\frac{2 \theta \lambda_1}{1+\theta^2},\\
     \frac{\lambda_1 + \lambda_2}{1+ \theta^2 - 2 \theta},& \mbox{if } \lambda_2
\geq -\frac{2 \theta \lambda_1}{1+\theta^2}.
    \end{cases}
   \end{equation*}

   Considering the case in which $(\lambda_1, \lambda_2) \in \mathcal{D}$, it
follows that
   \begin{equation*}
    2 |\theta + \lambda_2| < 1 + \theta^2 - 2 \lambda_1,
   \end{equation*}
   whence
   \begin{equation}\label{intthe}
    -\left(\frac{1+\theta^2-2\lambda_1}{2}\right) < \theta+\lambda_2 <
\frac{1+\theta^2-2\lambda_1}{2}.
   \end{equation}
   From the left-hand side of \eqref{intthe}, we get
   \begin{equation*}
    \frac{\lambda_1-\lambda_2}{1+\theta^2+2\theta} < \frac{1}{2},
   \end{equation*}
   while from the right-hand side of \eqref{intthe}, we obtain
   \begin{equation*}
    \frac{\lambda_1+\lambda_2}{1+\theta^2-2\theta} < \frac{1}{2}.
   \end{equation*}
   Hence, we conclude that $M_{\varphi_{\boldsymbol{\lambda}} \, g_\theta} <
1/2$.
On
the other hand, from
   \begin{equation*}
    ||\varphi_{\boldsymbol{\lambda}}||_\infty ||g_\theta||_\infty \geq
||\varphi_{\boldsymbol{\lambda}} \, g_\theta||_\infty =
\max\{|M_{\varphi_{\boldsymbol{\lambda}}
\, g_\theta}|, |m_{\varphi_{\boldsymbol{\lambda}} \, g_\theta}|\} \geq
-m_{\varphi_{\boldsymbol{\lambda}} \, g_\theta},
   \end{equation*}
   we conclude that $m_{\varphi_{\boldsymbol{\lambda}} \, g_\theta} \geq
-||\varphi_{\boldsymbol{\lambda}}||_\infty ||g_\theta||_\infty$. Therefore, we
just
proved that
   \begin{equation}\label{asser}
    [m_{\varphi_{\boldsymbol{\lambda}} \, g_\theta},
M_{\varphi_{\boldsymbol{\lambda}}
\,
g_\theta}] \subseteq [- ||\varphi_{\boldsymbol{\lambda}}||_\infty
||g_\theta||_\infty,
1/2).
   \end{equation}

   The denominator in the left-hand side of \eqref{quotient} satisfies
   \begin{equation*}
    \inf_{\omega \in \T}|1 + \theta^2 - 2 \theta \cos(\omega)| =
\min\{1+\theta^2-2\theta,1+\theta^2+2\theta\} > 0, \quad \mbox{for all }\,
\theta \in (-1,1).
   \end{equation*}
   Then, it follows from theorem 5.1 in Tyrtyshnikov \cite{tyrtyshnikov94}
that, if $F$ is
any arbitrary continuous function with bounded support (i.e., the set of those
$x \in \R$ for which $F(x) \neq 0$ is bounded), we get
   \begin{equation}\label{convTyrty}
    \lim_{n \to \infty} \frac{1}{n} \sum_{k=1}^n
F(\alpha_{n,k}^{\boldsymbol{\lambda}})
= \frac{1}{2\pi} \int_{\T}
(F\circ(\varphi_{\boldsymbol{\lambda}}\,g_\theta))(\omega)\,d\omega.
   \end{equation}
   In particular, the latter convergence applies itself when considering the
continuous function \\$F:[- ||\varphi_{\boldsymbol{\lambda}}||_\infty
||g_\theta||_\infty, 1/2) \rightarrow \R$ defined by
   \begin{equation*}
    F(x) = -\frac{\log(1 - 2 x)}{2}.
   \end{equation*}
   Indeed, from \eqref{bounded} and \eqref{asser}, combined with the
result of Lemma \ref{domD}, we conclude that $F(\cdot)$ has bounded support and
that $F(\alpha_{n,k}^{\boldsymbol{\lambda}})$ are finite, for every $1 \leq k
\leq
n$
and $n$ large enough. Besides that, $(F\circ(\varphi_{\boldsymbol{\lambda}}\,
g_\theta))(\omega) = \log[1 -2\,(\varphi_{\boldsymbol{\lambda}}\,
g_\theta)(\omega)]$
is finite, for every $\omega \in \T$, due to \eqref{asser}.
Therefore, the two sides of \eqref{convTyrty} are well defined and such
convergence holds, giving
   \begin{equation*}
    \begin{split}
    \lim_{n \to \infty} L_n(\lambda_1, \lambda_2) &= \lim_{n \to \infty}
-\frac{1}{2n} \sum_{k=1}^n \log(1 - 2 \alpha_{n,k}) = \lim_{n \to \infty}
\frac{1}{n} \sum_{k=1}^n F(\alpha_{n,k})\\
    & = -\frac{1}{2\pi} \int_{\T} (F\circ(\varphi_{\boldsymbol{\lambda}}\,
g_\theta))(\omega) \, d \omega = -\frac{1}{4\pi} \int_\T \log(1 -
2\,(\varphi_{\boldsymbol{\lambda}} \, g_\theta)(\omega))\,d\omega \\
    & = -\frac{1}{2} \log \left(\frac{1 + \theta^2 - 2\lambda_1 + \sqrt{(1 +
\theta^2 - 2\lambda_1)^2 - 4 (\theta + \lambda_2)^2}}{2}\right),
   \end{split}
   \end{equation*}
 where the last equality was achieved using equation 4.224(9) in
Gradshteyn and Ryzhik \cite{gradshteyn07}.
  \end{proof}

  \subsection{LDP of the random sequence $\boldsymbol{(\W_n)_{n \geq 2}}$}

  Here we use the lemmas of Subsection \ref{analysisNCG}, combined with the
G\"{a}rtner-Ellis' theorem, to prove that the sequence $(\W_n)_{n \geq 2}$ in
\eqref{Wn} satisfies an LDP. There are two conditions that must be
satisfied in order to apply the G\"{a}rtner-Ellis' theorem (see pages 43-44 in
Dembo and Zeitouni \cite{dembo10}).
  \begin{itemize}
   \item \textbf{Condition A:} for each $(\lambda_1, \lambda_2) \in \R^2$, the
limiting cumulant generating function $L(\cdot,\cdot)$, defined as the limit in
\eqref{convLnL} and explicitly given by \eqref{ar1lcgf}, exists as an
extended real number. Moreover, if
   \begin{equation}\label{domL}
    \mathcal{D}_L = \big\{(\lambda_1, \lambda_2) \in \R^2 \, | \, L(\lambda_1,
\lambda_2) < \infty\big\}
   \end{equation}
   denotes the \emph{effective domain} of $L(\cdot, \cdot)$, the origin must
belong to $\mathcal{D}_L^\circ$ (the interior of $\mathcal{D}_L$).

   \item \textbf{Condition B:} $L(\cdot,\cdot)$ is an \emph{essentially smooth}
function, that is,
   \begin{enumerate}
    \item $\mathcal{D}_L^\circ$ is non-empty;
    \item $L(\cdot,\cdot)$ is differentiable throughout $\mathcal{D}_L^\circ$;
    \item $L(\cdot, \cdot)$ is \emph{steep}, i.e., we get $\lim_{n \to
\infty}||\nabla
L(\lambda_{1,n},\lambda_{2,n})|| = \infty$, in the case
$(\lambda_{1,n},\lambda_{2,n})_{n \in \N}$ is a sequence in
$\mathcal{D}_L^\circ$ converging to a boundary point of $\mathcal{D}_L^\circ$,
where $||(x,y)|| = \sqrt{x^2 + y^2}$ denotes the usual Euclidean norm in
$\R^2$.
   \end{enumerate}
  \end{itemize}

  Note that, if \textbf{Condition A} above is satisfied, then
\textbf{Condition B.1} is redundant. In the following proposition, we verify
that both \textbf{Conditions A} and \textbf{B} are satisfied when considering
the LDP for $(\W_n)_{n \geq 2}$.  The cornerstone of our proof stands on the
observation that the effective domain $\mathcal{D}_L$, defined in \eqref{domL},
contains the domain $\mathcal{D}$, given in Lemma \ref{domD}.

\medskip

  \begin{prop} \label{SupConj}
   The sequence of random vectors $(\W_n)_{n \geq 2}$, defined in \eqref{Wn},
satisfies an LDP with good rate function
   \begin{equation}\label{ratear1}
    J(x,y) = \begin{cases}
              \frac{1}{2}\left[x(1 + \theta^2) - 1 - 2 y \theta +
\log\left(\frac{x}{x^2-y^2}\right)\right],& \mbox{for } 0 < x \mbox{ and  } |y|
< x,\\
              \infty,& \mbox{otherwise}.
             \end{cases}
   \end{equation}
  \end{prop}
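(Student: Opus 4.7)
The plan is to apply the Gärtner--Ellis theorem to $(\W_n)_{n \geq 2}$ and then compute the Fenchel--Legendre transform of the limiting cumulant generating function $L(\cdot,\cdot)$ furnished by Lemma \ref{lemma2}. First I would verify \textbf{Condition A}: Lemma \ref{lemma2} already supplies the existence of $L(\lambda_1,\lambda_2)$ as an extended real number on all of $\R^2$, and since $(0,0) \in \mathcal{D}_1 \subset \mathcal{D} \subseteq \mathcal{D}_L$, the origin lies in $\mathcal{D}_L^{\circ}$. A short check based on \eqref{ar1lcgf} and Lemma \ref{domD} shows that in fact $\mathcal{D}_L = \mathcal{D}$, because the argument of the square root inside $L$ is precisely the quantity that positive definiteness of $D_{n,\boldsymbol{\lambda}}$ forces to be positive.

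For \textbf{Condition B}, differentiability on $\mathcal{D}^{\circ}$ is immediate from the explicit formula \eqref{ar1lcgf}. To verify steepness I would compute, writing $a = 1+\theta^2 - 2\lambda_1$ and $b = \theta + \lambda_2$,
\begin{equation*}
\frac{\partial L}{\partial \lambda_1} = \frac{1}{\sqrt{a^2 - 4b^2}}, \qquad \frac{\partial L}{\partial \lambda_2} = \frac{a - \sqrt{a^2 - 4b^2}}{2b\,\sqrt{a^2-4b^2}},
\end{equation*}
and observe that on any sequence in $\mathcal{D}^{\circ}$ approaching the boundary, one has $a^2 - 4b^2 \to 0^{+}$, whence $\|\nabla L\|\to\infty$. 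Gärtner--Ellis then delivers an LDP with good rate function $J = L^{*}$, the Fenchel--Legendre transform of $L$.

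To obtain the explicit formula \eqref{ratear1} I would invert $\nabla L(\lambda_1,\lambda_2) = (x,y)$ on $\mathcal{D}^{\circ}$. The first equation gives $\sqrt{a^2-4b^2} = 1/x$; plugging into the second and simplifying yields
\begin{equation*}
a = \frac{x^2 + y^2}{x(x^2 - y^2)}, \qquad b = \frac{y}{x^2 - y^2},
\end{equation*}
i.e.\ $\lambda_1 = \tfrac{1+\theta^2}{2} - \tfrac{x^2+y^2}{2x(x^2-y^2)}$ and $\lambda_2 = \tfrac{y}{x^2-y^2} - \theta$. Substituting into $\lambda_1 x + \lambda_2 y - L(\lambda_1,\lambda_2)$, the term $\log\!\left(\tfrac{a + 1/x}{2}\right) = \log\!\left(\tfrac{x}{x^2-y^2}\right)$ and the cross terms collapse to $\tfrac{1}{2}[x(1+\theta^2) - 1 - 2\theta y] + \tfrac{1}{2}\log\!\left(\tfrac{x}{x^2-y^2}\right)$, which is precisely the claimed $J(x,y)$. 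The image of this critical-point map is exactly $\{(x,y) : 0 < x,\ |y| < x\}$, corresponding to the constraint $a > 2|b| > 0$.

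Outside this region I must show $J(x,y) = +\infty$. For each such $(x,y)$ one selects a sequence $(\lambda_1^{(k)},\lambda_2^{(k)})$ pushed to the appropriate boundary piece of $\mathcal{D}$ along which $\lambda_1 x + \lambda_2 y - L(\lambda_1,\lambda_2) \to +\infty$: the steepness of $L$ already used in Condition B guarantees that $L$ stays bounded while the linear part can be made arbitrarily large in the direction of the outward normal. The main obstacle I anticipate is exactly this last step together with the steepness verification, since the boundary of $\mathcal{D}$ is composed of two distinct pieces (from $\mathcal{D}_1$ and $\mathcal{D}_2$) and one must select boundary-approaching sequences adapted to each piece; in contrast, the algebraic inversion of $\nabla L$ and the simplification to \eqref{ratear1} are routine once the substitution $\sqrt{a^2-4b^2} = 1/x$ is in hand.
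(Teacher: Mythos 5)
Your overall strategy is the same as the paper's: verify the two Gärtner--Ellis conditions using Lemma \ref{lemma2}, compute the Fenchel--Legendre transform by solving $\nabla L(\lambda_1,\lambda_2)=(x,y)$, and show the transform is $+\infty$ off the cone $\{0<x,\ |y|<x\}$ by letting the linear part dominate. Your gradient formulas, the substitution $\sqrt{a^2-4b^2}=1/x$, the critical point $(\lambda_1^*,\lambda_2^*)$ and the final simplification all agree with the paper's computation, and that part is fine.

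The genuine gap is in your steepness verification. You claim that along \emph{any} sequence in $\mathcal{D}^{\circ}$ approaching the boundary one has $a^2-4b^2\to 0^{+}$, hence $\|\nabla L\|\to\infty$. That is false on the portion of $\partial\mathcal{D}$ inherited from $\mathcal{D}_2$: on the parabola $(\theta+\lambda_2)^2=\theta^2(1-2\lambda_1)$ with $\tfrac{1-\theta^2}{2}<\lambda_1<\tfrac12$ one computes
\begin{equation*}
a^2-4b^2=(1+\theta^2-2\lambda_1)^2-4\theta^2(1-2\lambda_1)=\bigl(1-2\lambda_1-\theta^2\bigr)^2>0,
\end{equation*}
so $\|\nabla L\|$ stays bounded there and your argument does not establish steepness of $L$ with effective domain $\mathcal{D}$. (The same problem reappears at the segment $\lambda_1=\tfrac12$, $4(\theta+\lambda_2)^2<\theta^4$ if you instead take the larger domain $\{\lambda_1<\tfrac12,\ 4(\theta+\lambda_2)^2<(1+\theta^2-2\lambda_1)^2\}$, which is in fact what the paper calls $\mathcal{D}_L$ and declares a proper superset of $\mathcal{D}$ --- so your identification $\mathcal{D}_L=\mathcal{D}$, justified by saying the square-root argument is ``precisely'' the positive-definiteness condition, also does not match Lemma \ref{domD}, whose $\mathcal{D}_2$ constraint is strictly stronger than $4b^2<a^2$.) This non-steepness on part of the boundary is exactly the classical difficulty for quadratic forms of AR(1) processes alluded to in the introduction; the paper sidesteps it by deferring to section 3.6 of Bryc and Dembo rather than by the direct gradient estimate you propose. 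Relatedly, your final step for points outside the cone leans on ``the steepness already used in Condition B,'' so the gap propagates: the paper instead sends $\lambda_1\to-\infty$ or $\lambda_2\to\pm\infty$ \emph{inside} $\mathcal{D}$, where the domain is unbounded, so that the linear term dominates without any appeal to boundary behaviour. You would need either to repair the steepness claim piece by piece (and it genuinely fails on the $\mathcal{D}_2$ piece) or to replace the Gärtner--Ellis application by an argument, as in the cited references, that handles the non-steep boundary.
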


  \begin{proof}
   Let $L(\cdot,\cdot)$ denote the function in \eqref{ar1lcgf} and
$\mathcal{D}$ the domain defined in Lemma \ref{domD}. The effective
domain of $L(\cdot, \cdot)$ is given by
   \begin{equation*}
    \mathcal{D}_L = \left\{(\lambda_1, \lambda_2) \in \R^2 \, | \, \lambda_1 <
\frac{1}{2} \, , \, 4 (\theta + \lambda_2)^2 < (1 + \theta^2 - 2 \lambda_1)^2
\right\}.
   \end{equation*}
   Notice that $\mathcal{D}$ is a proper subset of $\mathcal{D}_L$.
Furthermore, if $(\lambda_1,\lambda_2)=(0,0)$, then
   \begin{equation*}
    0 < (1-\theta^2)^2 \Rightarrow 0 < 1-2\theta^2+\theta^4 \Rightarrow
4\theta^2 < (1 + \theta^2)^2.
   \end{equation*}
   Whence, the origin $(0,0) \in \R^2$ belongs to the interior
of $\mathcal{D}_L$, for any $\theta \in (-1,1)$, proving that \textbf{Condition
A} above is fulfilled. The proof that
$L(\cdot,\cdot)$ is
an essentially smooth function follows the same steps as the proof given in
section 3.6 of Bryc and Dembo \cite{bryc97}, so that \textbf{Condition B} is
also verified.

   Let $J: \R^2 \rightarrow \R$ denote the Fenchel-Legendre dual of $L(\cdot,
\cdot)$, defined by the suppremum
$$ J(x, y)  = \sup_{(\lambda_1, \lambda_2) \in \R^2} \big\{x \lambda_1 + y
\lambda_2 - L(\lambda_1, \lambda_2)\big\}=$$
\begin{equation}\label{FLL}
\sup_{(\lambda_1, \lambda_2) \in \mathcal{D}} \left\{x \lambda_1 + y
\lambda_2 + \frac{1}{2} \log \left(\frac{1 + \theta^2 - 2\lambda_1 + \sqrt{(1 +
\theta^2 - 2\lambda_1)^2 - 4 (\theta + \lambda_2)^2}}{2}\right)\right\}.
   \end{equation}
   From the G\"{a}rtner-Ellis' theorem, $(\W_n)_{n \geq 2}$ satisfies
an LDP with good rate function $J(\cdot,\cdot)$. To explicitly compute
$J(\cdot,\cdot)$, consider the auxiliary function $K:
\mathcal{D} \rightarrow \R$,
defined by
   \begin{equation*}
    K(\lambda_1, \lambda_2)= x \lambda_1 + y \lambda_2 - L(\lambda_1,
\lambda_2), \quad \mbox{with } (x,y) \in \R^2.
   \end{equation*}
   The partial derivatives of $K(\cdot, \cdot)$ are
   \begin{equation*}
    K_{\lambda_1}(\lambda_1, \lambda_2) = x-\frac{1}{\sqrt{1 - 4 \lambda_1 + 4
\lambda_1^2 - 4 \lambda_2^2 - 8 \theta \lambda_2 - 2 \theta^2 (2 \lambda_1 + 1)
+ \theta^4}}
   \end{equation*}
   and
   \begin{equation*}
   K_{\lambda_2}(\lambda_1,\lambda_2) = y +
 \frac{-2 (\theta +
\lambda_2)}{\left(1 + \theta^2 - 2 \lambda_1 + \sqrt{\left(1+\theta ^2-2
\lambda_1\right)^2 - 4 (\theta + \lambda_2)^2}\right)\sqrt{\left(1+\theta^2 -
2\lambda_1\right)^2-4 (\theta +\lambda_2)^2}}.
   \end{equation*}
   Provided that $x > 0$ and $x>|y|$, the solution to the system of equations
   \begin{equation*}
    \begin{cases}
     K_{\lambda_1}(\lambda_1,\lambda_2) = 0,\\
     K_{\lambda_2}(\lambda_1,\lambda_2) = 0,
    \end{cases}
   \end{equation*}
   is given by
   \begin{equation*}
    \lambda_1^* = \frac{1+\theta^2}{2} - \frac{x^2 + y^2}{2x(x^2 - y^2)} \quad
\mbox{and} \quad \lambda_2^* = \frac{y}{x^2 - y^2} - \theta.
   \end{equation*}
   It is not difficult to prove that $(\lambda_1^*, \lambda_2^*)$ is the point
where the suppremum in \eqref{FLL} is attained. Hence, it follows that
   \begin{equation*}
    J(x,y) = K(\lambda_1^*,\lambda_2^*) = \frac{1}{2}\left[x(1+\theta^2) - 1 - 2
 y \theta + \log\left(\frac{x}{x^2-y^2}\right)\right],
   \end{equation*}
   where $0 < x \mbox{ and
} |y| < x.$

   Note that, the restrictions $0 < x$ and $|y| < x$ are related to the
inequalities $0 < \sum_{k=1}^n X_k^2$ (see McLeod and Jim\'{e}nez
\cite{mcleod84}) and
$|\sum_{k=2}^n X_k X_{k-1}| < \sum_{k=1}^n X_k^2 $.

   If $x \leq 0$ or $|y|  \geq x$, we may define $J(x,y) = \infty$, since
$K(\lambda_1, \lambda_2)$ is unbounded. Indeed, if $x < 0$, then
   \begin{equation*}
    \lim_{\lambda_1 \to -\infty} K(\lambda_1, \lambda_2) \approx \lim_{\lambda_1
\to -\infty} x \lambda_1 = \infty,
   \end{equation*}
   because the linear part $x \lambda_1$ rules over the logarithmic part of
$K(\lambda_1, \lambda_2)$, while if $x =0$, then
  \begin{equation*}
   \lim_{\lambda_1 \to -\infty} K(\lambda_1, \lambda_2) = \lim_{\lambda_1
\to -\infty} y \lambda_2 - L(\lambda_1,\lambda_2)= \infty.
  \end{equation*}
If $x > 0$, but $|y| \geq x$, then we have two cases to consider: the first one
is when $y \leq -x$, whereby
   \begin{equation*}
    \lim_{\lambda_2 \to -\infty} K(\lambda_1, \lambda_2) \approx \lim_{\lambda_2
\to -\infty} y \lambda_2 = \infty;
   \end{equation*}
   the second case is when $y \geq x$, for which it follows that
   \begin{equation*}
    \lim_{\lambda_2 \to \infty} K(\lambda_1, \lambda_2) \approx \lim_{\lambda_2
\to \infty} y \lambda_2 = \infty.
   \end{equation*}
  \end{proof}

  A graph of the function $J(\cdot,\cdot)$, in \eqref{ratear1}, is shown
in Figure \ref{graphRate}, when $\theta = 0.3$. Since $L(\cdot, \cdot)$
is a convex function, $J(\cdot,\cdot)$ must also be a convex function (see
section VI.5 in Ellis \cite{ellis85}).

  \begin{figure}[ht!]
   \begin{center}
   \includegraphics[scale = 0.38]{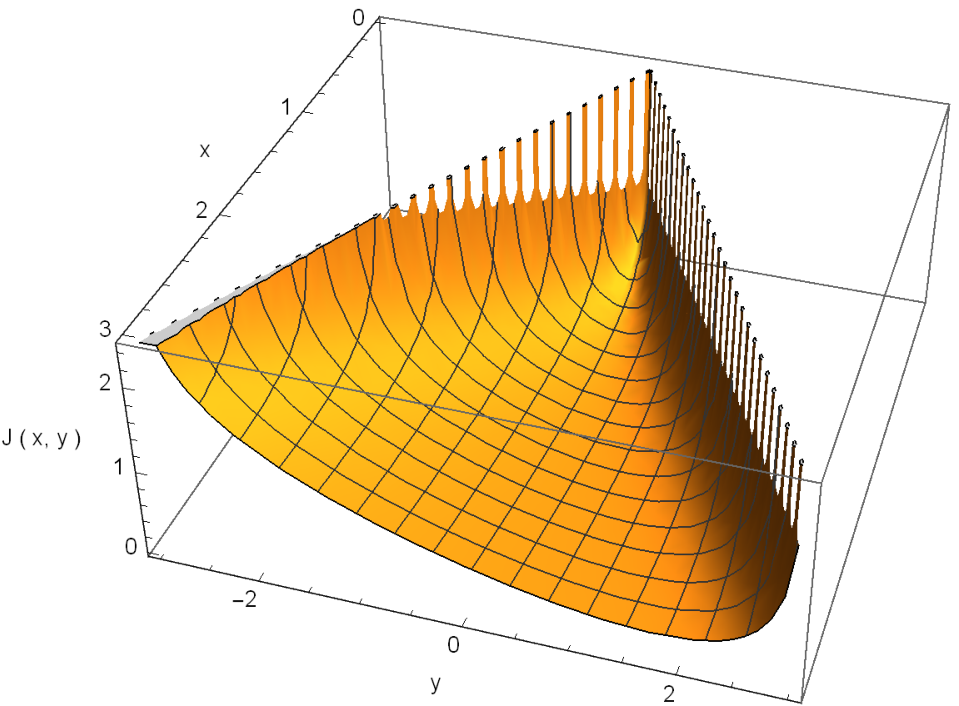} \quad
   \includegraphics[scale = 0.38]{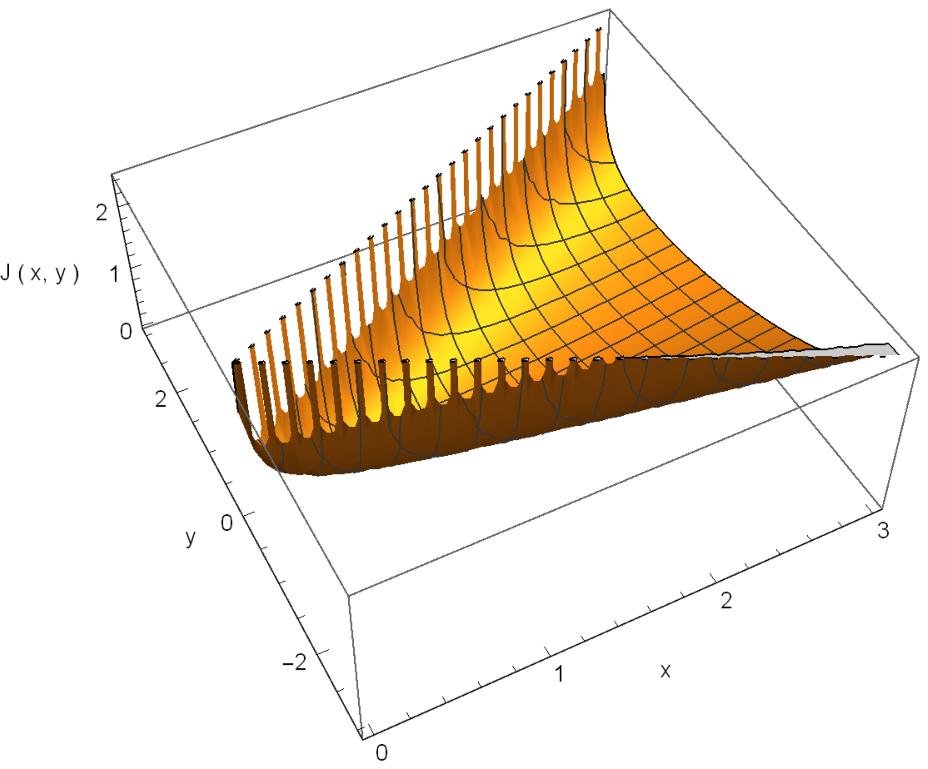}
   \caption{Graph of the function $J(x,y)$ given in \eqref{ratear1} when
$\theta = 0.3$, $x \in (0,3]$ and $y \in (-3, 3)$. \label{graphRate}}
   \end{center}
  \end{figure}
% ---
% END Section 2
% ---

% -----
% BEGIN Section 3
% -----
\section{Particular cases}
  We dedicate this section to show three particular examples where the
reasoning of the last section can be used, via Contraction Principle, to get
explicit rate functions for univariate random sequences. Two of these examples
were already known from Bercu {\it et al}.\ \cite{bercu97} and Bryc and
Smolenski \cite{bryc93}. We shall obtain them
as a continuous transform of the random vector $\W_n$, defined in \eqref{Wn}.
In Subsection \ref{ldpFOEA}, we present a result which we believe
is new in the literature.

The Contraction Principle will be of great importance for the computations of
the rate functions.

\begin{thm}[Contraction Principle]
 If a sequence of random vectors $(\boldsymbol{V}_n)_{n \in \N}$ with values in
$E
\subseteq \R^d$ satisfies an LDP with good rate function
$J(\boldsymbol{\cdot}):\R^d
\rightarrow [0, \infty]$ and $U_n = f(\boldsymbol{V}_n)$, where
$f(\boldsymbol{\cdot}):
E
\rightarrow
\R$ is a continuous function, then the random sequence $(U_n)_{n \in \N}$ also
satisfies an LDP with good rate function $I(\cdot):\R \rightarrow [0, \infty]$
given by
   \begin{equation*}
    I(c) = \inf_{\boldsymbol{x} \in E} \big\{J(\boldsymbol{x})\,|\, \mbox{with }
\boldsymbol{x} \mbox{
such that } f(\boldsymbol{x}) = c \big\}, \quad \mbox{for all } c \in \R.
   \end{equation*}
  \end{thm}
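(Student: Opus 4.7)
The plan is to verify the three requirements of Definition \ref{drate} for $I(\cdot)$ directly, exploiting the continuity of $f$ to translate topological properties between $\R$ and $E$. Throughout, I extend $J(\cdot)$ by $+\infty$ outside $E$, so that the LDP for $(\boldsymbol{V}_n)_{n \in \N}$ can be read off on arbitrary Borel subsets of $\R^d$ without worrying about the subspace topology.

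First, I would check that $I(\cdot)$ is a good rate function by establishing the identity
\begin{equation*}
I^{-1}([0,b]) = f\bigl(J^{-1}([0,b])\bigr), \quad \mbox{for every } b \geq 0.
\end{equation*}
The nontrivial inclusion requires showing that the infimum in the definition of $I(c)$ is attained whenever it is finite. Given a minimizing sequence $(\boldsymbol{x}_k)_{k \in \N}$ with $f(\boldsymbol{x}_k) = c$ and $J(\boldsymbol{x}_k) \to I(c)$, the sequence is eventually contained in some compact level set $J^{-1}([0,b])$; extracting a convergent subsequence $\boldsymbol{x}_{k_j} \to \boldsymbol{x}^*$, the continuity of $f$ gives $f(\boldsymbol{x}^*) = c$, and the lower semi-continuity of $J$ gives $J(\boldsymbol{x}^*) \leq I(c)$, so $\boldsymbol{x}^*$ attains the infimum. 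With this identity, $I^{-1}([0,b])$ appears as a continuous image of a compact set, hence compact; this simultaneously delivers the goodness property and (via closedness of level sets) the lower semi-continuity of $I(\cdot)$. Non-negativity of $I$ is immediate from non-negativity of $J$.

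Next, I would prove the upper bound. For a closed set $F \subseteq \R$, continuity of $f$ ensures that $f^{-1}(F)$ is closed in $E$, and hence (after the extension of $J$) may be regarded as closed for the purposes of the LDP on $\R^d$. Using the identity $\{U_n \in F\} = \{\boldsymbol{V}_n \in f^{-1}(F)\}$ together with the LDP upper bound for $(\boldsymbol{V}_n)_{n \in \N}$ yields
\begin{equation*}
\limsup_{n\to\infty} \frac{1}{n}\log \mathbb{P}(U_n \in F) \leq -\inf_{\boldsymbol{x} \in f^{-1}(F)} J(\boldsymbol{x}) = -\inf_{c \in F}\, \inf_{\boldsymbol{x} \in f^{-1}(\{c\})} J(\boldsymbol{x}) = -\inf_{c \in F} I(c),
\end{equation*}
where the middle equality is a routine iterated rewrite of the infimum.

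The lower bound is handled symmetrically: for $G \subseteq \R$ open, $f^{-1}(G)$ is open in $E$ by continuity, and the LDP lower bound for $(\boldsymbol{V}_n)_{n \in \N}$ together with the same rearrangement of infima gives $-\inf_{c \in G} I(c) \leq \liminf_{n \to \infty} n^{-1}\log \mathbb{P}(U_n \in G)$. I expect the only genuinely delicate step to be the attainment argument used to show $I^{-1}([0,b]) = f(J^{-1}([0,b]))$; once that is in hand, goodness of $I$ and both inequalities follow almost immediately from continuity of $f$ and the corresponding properties of $(\boldsymbol{V}_n)_{n \in \N}$.
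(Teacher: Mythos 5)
Your argument is correct in substance, but note that the paper does not actually prove this theorem --- it simply cites Section 4.2.1 of Dembo and Zeitouni --- and what you have written is essentially the textbook proof from that reference: goodness of $I(\cdot)$ via attainment of the infimum on compact level sets, followed by transfer of the upper and lower bounds through $f^{-1}$. Two points deserve a little more care than you give them. First, your opening move of redefining $J$ to be $+\infty$ off $E$ is not innocuous for the upper bound: enlarging the rate function \emph{strengthens} the inequality $\limsup_{n\to\infty} n^{-1}\log\mathbb{P}(\boldsymbol{V}_n\in F)\leq -\inf_{F}J$, so the step is legitimate only if the effective domain of $J$ is already contained in $E$ (this is the implicit reading of the theorem and holds in all of the paper's applications, but it should be stated as a hypothesis rather than imposed by fiat). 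Second, ``may be regarded as closed for the purposes of the LDP'' should be spelled out: $f^{-1}(F)$ is closed only in the subspace topology of $E$, so one applies the upper bound to its closure $\overline{f^{-1}(F)}$ in $\R^d$ and observes that $\overline{f^{-1}(F)}\setminus f^{-1}(F)\subseteq \R^d\setminus E$, where $J=\infty$, so the infimum is unchanged; dually, for the lower bound one writes $f^{-1}(G)=O\cap E$ with $O$ open in $\R^d$ and uses $\mathbb{P}(\boldsymbol{V}_n\in O\cap E)=\mathbb{P}(\boldsymbol{V}_n\in O)$, which requires $\mathbb{P}(\boldsymbol{V}_n\in E)=1$. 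With these clarifications the proof is complete; in particular your attainment argument establishing $I^{-1}([0,b])=f\bigl(J^{-1}([0,b])\bigr)$ is exactly the right way to get both goodness and lower semi-continuity of $I(\cdot)$ at once.
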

  \begin{proof}
   See section 4.2.1 in Dembo and Zeitouni \cite{dembo10}.
  \end{proof}

  Since the sequence of random vectors $(\W_n)_{n \geq 2}$ satisfies an LDP
with rate function $J(\cdot,\cdot)$, given in \eqref{ratear1}, the
Contraction Principle ensures that any sequence of vectors $(f(\W_n))_{n \geq
2}$, for $f: \R^2 \rightarrow \R$ continuous, satisfies an LDP with good rate
function
  \begin{equation}\label{Rate2}
   I(c) = \inf_{(x,y) \in \R^2} \big\{J(x,y)\,|\, \mbox{with } (x,y) \mbox{
such that } f(x,y) = c \big\}, \quad \mbox{for all } c \in \R.
  \end{equation}

There is a standard procedure involving Calculus techniques for computing the
infimum in \eqref{Rate2}, namely, checking for the critical points of the
derivatives from $J(\cdot,\cdot)$. In the examples considered below, the
Wolfram Mathematica software (version 11.2.0.0) was used in the calculations.

Note that, $f(\W_n) = f\left(\frac{1}{n}\left(\sum_{k=1}^n X_k^2,
\sum_{k=2}^n X_k X_{k-1}\right)\right)$ is a continuous function involving only
the components $\frac{1}{n} \sum_{k=1}^n X_k^2$ and $\frac{1}{n} \sum_{k=2} X_k
X_{k-1}$. Any statistic that can be written in terms of these components, as a
continuous transform of $\W_n$, is suitable for our method. In particular, in
Sections 3.1-3.3 we shall consider $f: \R^2 \rightarrow \R$ as
being respectively defined by
  \begin{enumerate}
   \item $f(x,y) = x$;
   \item $f(x,y) = y$;
   \item $f(x,y) = \frac{y}{x}$, for $x > 0$.
  \end{enumerate}
  Other continuous functions $f: \R^2 \rightarrow \R$ could also be considered,
however, in the present work, we shall restrict our attention to these three
cases.

\subsection{LDP for the quadratic mean}\label{ldpQM}
 Consider the Quadratic Mean of a random sample $X_1, \cdots, X_n$ which
satisfies \eqref{ar1}, given by
\begin{equation*}
    \tilde\gamma_n(0) = \frac{1}{n} \sum_{k=1}^n X_k^2.
\end{equation*}

 Bryc and Smolenski \cite{bryc93} proved that the sequence
$(\tilde\gamma_n(0))_{n \in \N}$ satisfies an LDP with rate function given by
   \begin{equation}\label{IBryc}
    \mathbb{I}(c) =
     \begin{cases}
       \frac{1}{2}\left[ c \left(1 + \theta^2\right) - \sqrt{1 + 4 \theta^2 c^2}
- \log \left(\frac{2c}{1 + \sqrt{1 + 4 \theta^2 c^2}}\right) \right], & \mbox{if
} c > 0,\\
      \infty, & \mbox{if } c \leq 0.
     \end{cases}
   \end{equation}
   Here we obtain the result from Bryc and Smolenski \cite{bryc93} as a
particular case, by
using Proposition \ref{SupConj} and the Contraction Principle.

   Note that $\tilde\gamma_n(0)$ may be obtained as the projection on the
first coordinate of the vector $\W_n$, given in \eqref{Wn}. Consider
$f_1: \R^2 \rightarrow \R$ the continuous function given by $f_1(x,y) = x$. Then
$f_1(\W_n) = \tilde\gamma_n(0)$ and, since $(\W_n)_{n \geq 2}$
satisfies an LDP with rate function $J(\cdot, \cdot)$, given in \eqref{ratear1},
the Contraction Principle ensures that $(\tilde\gamma_n(0))_{n
\in \N}$ satisfies an LDP with rate function, which we shall denote by $I_1: \R
\rightarrow [0, \infty]$. Then $I_1(\cdot)$ can be computed from \eqref{Rate2}
in
the following way.
   \begin{itemize}
   \item By the Contraction Principle, if $c > 0$, then
   \begin{equation}\label{I1c}
    \begin{split}
    I_1(c) & = \inf_{\{0 < x , \ |y| < x\}} \{J(x,y) \,|\,  f_1(x,y) = c\} =
\inf_{|y| < c} \{J(c,y)\}\\
    & = \inf_{|y| < c}\left\{\frac{1}{2}\left[c (1 + \theta^2) - 1 - 2 y
\theta + \log\left(\frac{c}{c^2 - y^2}\right) \right]\right\};
    \end{split}
   \end{equation}
   \item The infimum in \eqref{I1c} is attained at
  \begin{equation*}
   y_c =
   \begin{cases}
    \displaystyle \frac{-1 + \sqrt{1 + 4 c^2 \theta^2}}{2 \theta}, & \mbox{if }
0 < |\theta| < 1,\\
    0, & \mbox{if } \theta = 0;
   \end{cases}
  \end{equation*}

  \item  If $\theta = 0$, then it immediately follows that
  \begin{equation*}
   I_1(c) = \frac{c - 1 - \log (c)}{2};
  \end{equation*}

  \item  If $0 < |\theta| < 1$, then, after some algebraic computations, we
obtain
  \begin{equation*}
   \begin{split}
   I_1(c) &= \frac{1}{2}\left[c (1 + \theta^2) - 1 - 2 y_c \theta +
\log\left(\frac{c}{c^2 - y_c^2}\right)\right]\\
   &= \frac{1}{2} \left[ c(1 + \theta^2) - \sqrt{1 + 4 c^2 \theta^2} + \log
\left( \frac{2 c \theta^2}{\sqrt{1 + 4 c^2 \theta^2} - 1} \right)\right],
   \end{split}
  \end{equation*}
  and since
  \begin{equation*}
   \begin{split}
   &\log\left(\frac{2c\theta^2}{\sqrt{1 + 4 c^2 \theta^2} - 1}\right) =
\log\left(\frac{2c\theta^2(\sqrt{1 + 4 c^2 \theta^2} + 1)}{(\sqrt{1 + 4 c^2
\theta^2} - 1)(\sqrt{1 + 4 c^2 \theta^2} + 1)}\right) \\
   & = \log\left(\frac{2c\theta^2(\sqrt{1 + 4 c^2 \theta^2} + 1)}{1 + 4 c^2
\theta^2 - 1}\right) = \log\left(\frac{\sqrt{1 + 4 c^2 \theta^2} + 1}{2
c}\right)  = - \log\left(\frac{2 c}{\sqrt{1 + 4 c^2 \theta^2} + 1}\right),
   \end{split}
  \end{equation*}
  we obtain
  \begin{equation*}
   I_1(c) = \frac{1}{2} \left[ c(1 + \theta^2) - \sqrt{1 + 4 c^2 \theta^2} -
\log \left(\frac{2 c}{\sqrt{1 + 4 c^2 \theta^2} + 1} \right)\right];
  \end{equation*}

  \item Considering that $I_1(c) = \infty$, for $c \leq 0$, we conclude that
$I_1(c) = \mathbb{I}(c)$, for all $c \in \R$, with $\mathbb{I}(\cdot)$ defined
in \eqref{IBryc}.
  \end{itemize}

  Therefore, we get the same result as in expression (1.2) in Bryc and
Smolenski \cite{bryc93}.
The graphs of $I_1(\cdot)$ are illustrated in Figure \ref{I1} for four
different values of $\theta$. Notice that $I_1(\cdot)$ is symmetric with
respect to the values of $\theta$, i.e., $I_1(\cdot)$ is the same function for
$\theta$ and $-\theta$, given that $\theta \in (0,1)$.

  \begin{figure}[ht!]
   \begin{center}
   \includegraphics[scale = 0.4]{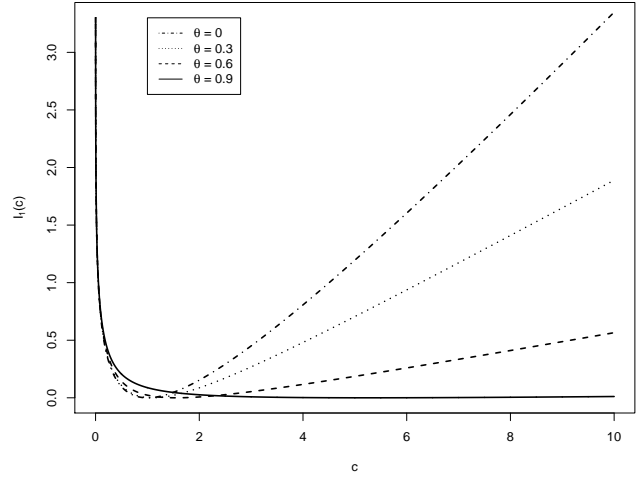}\caption{Graphs of
$I_1(\cdot)$ for $\theta \in \{0, \, 0.3, \, 0.6, \, 0.9\}$ and $c \in (0,
10]$. \label{I1}}
   \end{center}
  \end{figure}

\subsection{LDP for the first order empirical autocovariance}\label{ldpFOEA}

Consider now the first order Empirical Autocovariance of $X_1,
\cdots, X_n$, defined below as
\begin{equation*}
  \tilde\gamma_n(1) = \frac{1}{n} \sum_{k=2}^n X_k X_{k+1}.
\end{equation*}
By the same reasoning as for the Quadratic Mean, we show here that
the sequence $(\tilde\gamma_n(1))_{n \geq 2}$ satisfies an LDP, under the
assumption that $(X_n)_{n \in \N}$ follows an AR(1) process, as defined in
\eqref{ar1}. We present the explicit expression for the deviation function, a
result which we believe has not yet been shown in the literature.

Consider the continuous function $f_2: \R^2 \rightarrow \R$, with law
$f_2(x, y) = y$. Since $f_2(\W_n) = \tilde\gamma_n(1)$, it follows from
Proposition \ref{SupConj} and the Contraction Principle that
$(\tilde\gamma_n(1))_{n \geq 2}$ satisfies an LDP with rate function
$I_2:\R \rightarrow [0,\infty]$. To give an explicit expression for
$I_2(\cdot)$,
we proceed as follows.

\begin{itemize}
 \item By the Contraction Principle,
   \begin{equation}\label{I2c}
    \begin{split}
    I_2(c) & = \inf_{\{0 < x , \ |y| < x\}} \{J(x,y) \,|\, f_2(x,y) = c\} =
\inf_{\{0 < x, \ |c| < x\}} \{J(x,c)\}\\
    & = \inf_{\{0 < x, \ |c| < x\}}\left\{\frac{1}{2}\left[x(1 + \theta^2) -1 -2
c \theta + \log \left(\frac{x}{x^2-c^2}\right) \right]\right\};
    \end{split}
   \end{equation}
   \item
   Denote by $A(c, \theta) =$
   \begin{equation*}
     \sqrt[3]{1 + 18 c^2 \left(1+\theta ^2\right)^2+3 \sqrt{3}
\sqrt{-c^2 \left(1+\theta ^2\right)^2 \left(c^4 \left(1+\theta ^2\right)^4-11
c^2 \left(1+\theta ^2\right)^2-1\right)}}.
   \end{equation*}
   Then, the infimum in \eqref{I2c} is attained at
   \begin{equation}\label{xc}
    x_c = \frac{1 +3 c^2\left(1 + \theta^2\right)^2 +A(c,\theta)
+A\left(c,\theta\right)^2}{3 \left(1 + \theta ^2\right) A(c, \theta)},
   \end{equation}
   provided that
   \begin{equation}\label{root}
    c^2 < \frac{11 + 5 \sqrt{5}}{2(1+\theta^2)^2}.
   \end{equation}
   The condition in \eqref{root} guarantees that $A(c,
\theta)$ is real valued when $c \in \left[-\sqrt{\frac{11 + 5
\sqrt{5}}{2(1+\theta^2)^2}}, \sqrt{\frac{11 + 5
\sqrt{5}}{2(1+\theta^2)^2}}\right]$.

   \item  Inserting \eqref{xc} into \eqref{I2c}, we obtain
   \begingroup\makeatletter\def\f@size{8}\check@mathfonts
\def\maketag@@@#1{\hbox{\m@th\normalsize\normalfont#1}}
   \begin{equation*}
    \begin{split}
    &I_2(c) = \frac{1}{2}\left[x_c (1 - 2 c \theta +\theta^2) - 1 +
\log\left(\frac{1}{x_c(1 - c^2)}\right)\right] = \\
    &= \frac{1+3 c^2 (1 + \theta^2)^2 + \left(-2-6 c \theta +3 \log
\left[\frac{1 + 3 c^2 (1 + \theta ^2)^2 + A(c, \theta) +  A(c, \theta)^2}{3 (1 +
\theta ^2) A(c, \theta) \left(\frac{\left(1 + 3 c^2 \left(1 + \theta^2\right)^2
+ A(c, \theta) + A(c, \theta)^2\right)^2}{9 \left(1+\theta^2\right)^2 A(c,
\theta)^2}-c^2\right)}\right] \right) A(c, \theta) + A(c, \theta)^2}{6 A(c,
\theta)},
    \end{split}
   \end{equation*} \endgroup
   for $c^2 < \frac{11 + 5 \sqrt{5}}{2(1+\theta^2)^2}$;

   \item By setting $I_2(c) = + \infty$, if $c^2 \geq \frac{11 + 5
\sqrt{5}}{2(1+\theta^2)^2}$, we concluded that $(\tilde\gamma_n(1))_{n \geq 2}$
satisfies an LDP with rate function $I_2(\cdot)$.
\end{itemize}

 The graph of $I_2(\cdot)$ is illustrated in Figure \ref{I2} for five different
values of $\theta$.

\begin{figure}[ht!]
 \begin{center}
   \includegraphics[scale = 0.4]{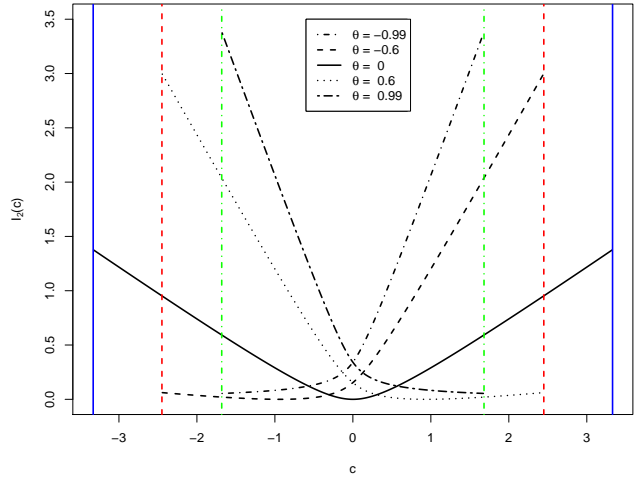}\caption{Graphs of
$I_2(\cdot)$ for $\theta \in \{-0.99, -0.6, \ 0, \, 0.6, \, 0.99\}$. The
vertical lines $c^2 = (11+5 \sqrt{5})/2$ (in blue), $c^2 = (11 + 5
\sqrt{5})/3.699$ (in red) and $c^2 = (11 + 5 \sqrt{5})/7.841$ (in green)
represent the values of $c$ where $I_2(\cdot)$ changes to $\infty$, when
$\theta = 0$, $|\theta| = 0.6$ and $|\theta| = 0.99$, respectively. \label{I2}}
 \end{center}
\end{figure}

  \subsection{LDP for the Yule-Walker estimates}

  Consider the Yule-Walker estimator
  \begin{equation}\label{thetan}
   \tilde{\theta}_n = \frac{\sum_{k=2}^n X_k X_{k-1}}{\sum_{k=1}^n X_k^2}
  \end{equation}
  of the parameter $\theta$, for
the AR(1) processes given in \eqref{ar1}. The asymptotical behavior of
such estimator is well known (see Brockwell and Davis \cite{brockwell91}), so
that
  \begin{equation*}
   \sqrt{n}(\tilde{\theta}_n - \theta) \Rightarrow \mathcal{N}(0, 1 - \theta^2)
  \end{equation*}
  and that (see Mann and Wald \cite{mann43})
  \begin{equation*}
   \tilde{\theta}_n \xrightarrow{n \to \infty} \theta, \quad \mbox{almost
surely.}
  \end{equation*}

  In Bercu {\it et al}.\ \cite{bercu97} it was proved that the Yule-Walker
estimator satisfies an LDP with rate function given by
  \begin{equation*}
   S(c) = \begin{cases}\displaystyle
           \frac{1}{2} \log\left(\frac{1 + \theta^2 - 2 \theta c}{1 -
c^2}\right),& \mbox{if } |c| < 1,\\
           \infty,& \mbox{otherwise}.
          \end{cases}
  \end{equation*}
Latter on, Bercu {\it et al}.\ \cite{bercu00} provided a Sharp
Large Deviation Principle (SLDP) for Hermitian quadratic forms of stationary
Gaussian processes, obtaining the Yule-Walker's SLDP as a particular case.
In Bercu \cite{bercu01}, the study on LDP of the Yule-Walker estimator
in AR(1) processes was extended to the unstable ($|\theta| = 1$) and explosive
($|\theta| > 1$) cases.

Here we obtain the result from Bercu {\it et al}.\ \cite{bercu97} by using
Proposition \ref{SupConj} and the Contraction Principle. Since the rate
function can be related to the sequence of probabilities
$\mathbb{P}\left(\tilde{\theta}_n \geq c\right)$, for $|\tilde{\theta}_n| < 1$
and $n \geq 2$, it
actually makes sense to get $S(c)$ finite, for $|c|<1$, and infinite
when $|c| \geq 1$.

  From \eqref{thetan}, note that
  \begin{equation*}
   \tilde{\theta}_n = \frac{\sum_{k=2}^n X_k X_{k-1}}{\sum_{k=1}^n X_k^2} =
f(\W_n),
  \end{equation*}
  where $\W_n$ is the random vector given in \eqref{Wn} and
$f: \R^2 \rightarrow \R$ is the continuous function defined by
  \begin{equation}\label{thef}
   f(x, y) = \frac{y}{x}, \quad \mbox{for } 0 < x \mbox{ and } |y| < x.
  \end{equation}

Since $(\W_n)_{n \geq 2}$ satisfies an LDP with rate function $J(\cdot,
\cdot)$, given in \eqref{ratear1}, the Contraction Principle
is applicable and $(\tilde{\theta}_n)_{n \geq 2}$ must satisfy an LDP with rate
function, given by $I_{\theta}(\cdot): \R \rightarrow [0,\infty]$.
Then $I_{\theta}(\cdot)$ can be computed from \eqref{Rate2} and
\eqref{thef} as follows.

  \begin{itemize}
   \item By the Contraction Principle,
   \begin{equation}\label{rathe}
    \begin{split}
    I_{\theta}(c)
    & = \inf_{\{0 < x , \ |y| < x\}} \{J(x,y) \,|\, f(x,y) = c\}
    = \inf_{\{0 < x , \ |y| < x\}} \{J(x,y) \,|\, y - c x = 0\} \\
    & = \inf_{0 < x}\left\{\frac{1}{2}\left[x (1 - 2 c \theta +\theta^2) -
1 + \log\left(\frac{1}{x(1 - c^2)}\right) \right]\right\}, \quad \mbox{for }
|c|<1;
    \end{split}
   \end{equation}
   \item The infimum in \eqref{rathe} is attained at
  \begin{equation}\label{xctheta}
   x_c = \frac{1}{1 - 2 c \theta + \theta^2};
  \end{equation}

  \item  Inserting \eqref{xctheta} into \eqref{rathe},
$I_{\theta}(c)$, for $|c|<1$, reduces itself to
  \begin{equation*}
   I_{\theta}(c) = \frac{1}{2}\left[x_c (1 - 2 c \theta +\theta^2) - 1 +
\log\left(\frac{1}{x_c(1 - c^2)}\right)\right] = \frac{1}{2} \log\left(\frac{1 +
\theta^2 - 2 \theta c}{1 - c^2}\right);
  \end{equation*}

  \item Considering that $I_{\theta}(c) = \infty$, for $|c| \geq 1$, we obtain
$I_{\theta}(c) = S(c), \forall c \in \R$.
  \end{itemize}

  Therefore, we get the same result as in expression (4.6) in Bercu {\it et
al}.\ \cite{bercu97}.
The graph of $I_{\theta}(\cdot)$ is illustrated in Figure \ref{Itheta} for
three different values of $\theta$.

 \begin{figure}
   \centering
   \includegraphics[scale = 0.4]{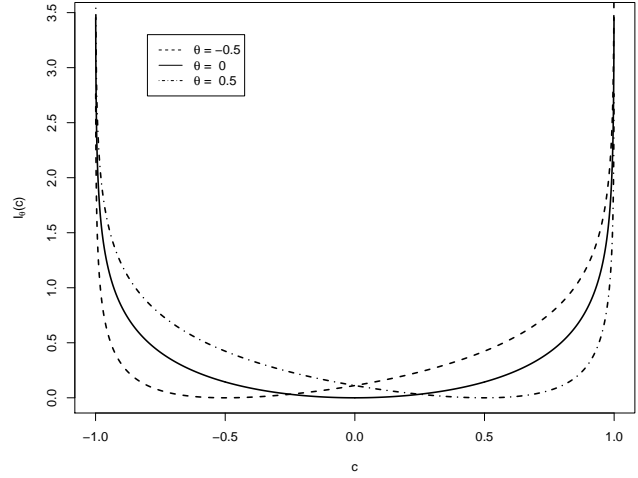}\caption{Graph of
$I_{\theta}(\cdot)$ for $\theta \in \{-0.5, 0, 0.5\}$ and $c \in (-1, 1)$.
\label{Itheta}}
  \end{figure}

% ---
% END Section 3
% ---

% -----
% BEGIN Section 4
% -----
\section{Large deviations for the bivariate SQ-Mean}

After finding the rate function for the random sequence $(n^{-1} \sum_{k=1}^n
X_k^2)_{n \geq 2}$ in Section 3.1, there exists a simple variation of that
approach leading to the LDP for the sequence of bivariate SQ-Mean
$(\boldsymbol{S}_n)_{n \in \N}$,
where
\begin{equation}\label{bmSn}
 \boldsymbol{S}_n = \frac{1}{n} \left(\sum_{k=1}^n X_k, \sum_{k=1}^n
X_k^2\right).
\end{equation}
We shall use a result proved in Bryc and Dembo \cite{bryc97}, which we
enunciate below for completeness.

\begin{prop}\label{propbryc}
 Let $(X_n)_{n \in \N}$ be a real-valued centered stationary Gaussian process
whose spectral density $f(\cdot)$ is differentiable. Then,
$(\boldsymbol{S}_n)_{n
\in
\N}$, for $\boldsymbol{S}_n$ given in \eqref{bmSn}, satisfies an LDP (in $\R^2$)
with
good rate function
\begin{equation}\label{rateK}
 K(x, y) = I(y-x^2) + \frac{x^2}{2 f(0)},
\end{equation}
where $0/0:=0$ in \eqref{rateK} and $I(\cdot)$ is the rate function associated
to $(n^{-1} \sum_{k=1}^n X_k^2)$.
\end{prop}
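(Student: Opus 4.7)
My strategy is to apply the Gärtner-Ellis theorem directly to $(\boldsymbol{S}_n)_{n\in\N}$, compute the limiting normalized cumulant generating function $L(\cdot,\cdot):\R^2\to\R\cup\{\infty\}$ in closed form, and then identify its Fenchel-Legendre transform as $K(\cdot,\cdot)$. Throughout I exploit the fact that $X^{(n)}=(X_1,\ldots,X_n)^T$ is centered Gaussian with covariance matrix $T_n(f)$, so any exponential of a linear-plus-quadratic form in $X^{(n)}$ admits an explicit moment generating function.

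First I would write
\begin{equation*}
L_n(\lambda_1,\lambda_2)=\frac{1}{n}\log\E\!\left[\exp\!\bigl(\lambda_1\,\mathbf{1}^{T}X^{(n)}+\lambda_2\,{X^{(n)}}^{T}X^{(n)}\bigr)\right]
\end{equation*}
and apply the standard completion-of-square identity for Gaussian vectors to obtain
\begin{equation*}
L_n(\lambda_1,\lambda_2)=-\frac{1}{2n}\log\det\!\bigl(I_n-2\lambda_2 T_n(f)\bigr)+\frac{\lambda_1^{2}}{2n}\,\mathbf{1}^{T}\bigl(T_n(f)^{-1}-2\lambda_2 I_n\bigr)^{-1}\mathbf{1},
\end{equation*}
valid whenever $I_n-2\lambda_2 T_n(f)$ is positive definite. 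The first term is exactly the normalized cumulant generating function of $\tilde\gamma_n(0)=n^{-1}\sum_{k=1}^n X_k^2$, and by the Szegő-Tyrtyshnikov argument used in the proof of Lemma \ref{lemma2} it converges to $L_{qm}(\lambda_2):=-\tfrac{1}{4\pi}\int_\T\log\bigl(1-2\lambda_2 f(\omega)\bigr)\,d\omega$, whose Fenchel-Legendre dual is precisely the rate function $I(\cdot)$ appearing in the statement.

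For the cross term, I use the identity $(T_n(f)^{-1}-2\lambda_2 I_n)^{-1}=T_n(f)\bigl(I_n-2\lambda_2 T_n(f)\bigr)^{-1}$ and compare this matrix to the Toeplitz matrix associated with the symbol $h(\omega)=f(\omega)/(1-2\lambda_2 f(\omega))$; since $n^{-1}\mathbf{1}^{T}T_n(h)\mathbf{1}=\sum_{|m|<n}(1-|m|/n)\hat h(m)\to h(0)$ by Fejér's theorem (with $h$ continuous at the origin by differentiability of $f$), the limit of the cross term is $\lambda_1^{2}f(0)/\bigl(2(1-2\lambda_2 f(0))\bigr)$. Combining,
\begin{equation*}
L(\lambda_1,\lambda_2)=L_{qm}(\lambda_2)+\frac{\lambda_1^{2}f(0)}{2\bigl(1-2\lambda_2 f(0)\bigr)},
\end{equation*}
with the convention $0/0:=0$ handling $f(0)=0$. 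Taking the Legendre transform in the two variables successively, the inner maximisation over $\lambda_1$ is quadratic and yields the value $x^{2}(1-2\lambda_2 f(0))/(2f(0))$ at $\lambda_1^{\ast}=x(1-2\lambda_2 f(0))/f(0)$; the remaining supremum over $\lambda_2$ then reduces to $\sup_{\lambda_2}\bigl\{\lambda_2(y-x^{2})-L_{qm}(\lambda_2)\bigr\}+x^{2}/(2f(0))=I(y-x^{2})+x^{2}/(2f(0))$, which is $K(x,y)$. The Gärtner-Ellis hypotheses (origin in the interior of the effective domain, essential smoothness) are inherited from the quadratic-mean case, together with the obvious smoothness and steepness of the rational function $\lambda_1^{2}f(0)/(2-4\lambda_2 f(0))$ on its domain.

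The main obstacle is the rigorous justification of the Toeplitz limit for the cross term: $(T_n(f)^{-1}-2\lambda_2 I_n)^{-1}$ is not itself a Toeplitz matrix, so one cannot directly invoke Fejér's theorem. The argument, which is where I would follow Bryc and Dembo \cite{bryc97}, proceeds by approximating this resolvent matrix by $T_n(h)$ in a weak operator sense via a resolvent identity, using the uniform bound \eqref{bounded} on the eigenvalues of $T_n(f)$ coming from $f\in L^{\infty}(\T)$, and then showing that the resulting edge contribution to $n^{-1}\mathbf{1}^{T}(\cdot)\mathbf{1}$ is of order $o(1)$; the degenerate case $f(0)=0$ requires separate handling of the $\lambda_1$-direction, where the effective domain collapses to $\{\lambda_1=0\}$ and forces $x=0$ in the Legendre dual, which is exactly the content of the convention $0/0:=0$.
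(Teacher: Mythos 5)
The paper does not actually prove this proposition: its ``proof'' is a one-line citation to Section~3.5 of Bryc and Dembo \cite{bryc97}, so there is nothing in the text to compare your argument against except that reference. Your reconstruction is essentially the cited argument, and the algebra checks out: the Gaussian completion-of-squares gives exactly the stated $L_n$, the determinant term converges to $L_{qm}(\lambda_2)$ by the Szeg\H{o}/Tyrtyshnikov limit already used in Lemma~\ref{lemma2}, the value of the inner maximisation over $\lambda_1$ is indeed $x^2(1-2\lambda_2 f(0))/(2f(0)) = x^2/(2f(0)) - \lambda_2 x^2$, and the remaining supremum over $\lambda_2$ collapses to $I(y-x^2)$ as you say. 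The two places where your write-up is not self-contained are precisely the places where the real work of \cite{bryc97} lives: (i) the identification of $n^{-1}\mathbf{1}^{T}T_n(f)\bigl(I_n-2\lambda_2 T_n(f)\bigr)^{-1}\mathbf{1}$ with the Fej\'er mean of $h=f/(1-2\lambda_2 f)$ at the origin, since the resolvent is not Toeplitz (this is where the differentiability hypothesis on $f$ is actually consumed); and (ii) the claim that the G\"artner--Ellis hypotheses are ``inherited'' --- steepness of $L_{qm}$ at $\lambda_2\uparrow 1/(2M_f)$ is not automatic for a general spectral density (it can fail when $f$ attains its maximum on a null set in a sufficiently flat way, which is why Bryc and Dembo do not argue purely via G\"artner--Ellis), though it does hold in the AR(1) and MA(1) cases the paper uses. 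You flag both points honestly and defer to the reference, which is no worse than what the paper itself does; as a standalone proof it would need those two steps filled in.
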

\begin{proof}
 See section 3.5 in Bryc and Dembo \cite{bryc97}.
\end{proof}

We dedicate the next two subsections to the particular study of the LDP of the
bivariate SQ-Mean when $(X_n)_{n \in \N}$ is an AR(1) process (Subsection
\ref{SQar1}) and is an MA(1) process (Subsection 4.2). Since
the LDP for the Quadratic Mean is already available for the AR(1) process, it
is easy to show such property in this case. For the MA(1) process,
however, we must first derive the LDP of the Quadratic Mean in order to apply
Proposition \ref{propbryc} and to provide the LDP for the bivariate SQ-Mean,
likewise.

\subsection{AR(1) process}\label{SQar1}

Since the AR(1) process $(X_n)_{n \in \N}$ in \eqref{ar1} is a
real-valued centered stationary Gaussian process, it follows from Proposition
\ref{propbryc} that $(\boldsymbol{S}_n)_{n \in \N}$ satisfies an LDP
with rate function
\begin{equation*}
 J_{\boldsymbol{S}}(x,y) = \mathbb{I}(y-x^2)+\frac{x^2}{2 g_\theta(0)},
\end{equation*}
where $\mathbb{I}(\cdot)$ is defined by \eqref{IBryc} and
$g_\theta(\cdot)$ denotes the spectral density function given in
\eqref{spectral}. Note that $g_\theta(\cdot)$ is differentiable. The explicit
rate function is given by
\begin{equation*}
J_{\boldsymbol{S}}(x,y) =
  \begin{cases}
             \frac{1}{2}\left[y(1+\theta^2)-2 x^2 \theta  -
\sqrt{1+4\theta^2(y-x^2)^2} -
\log\left(\frac{2(y-x^2)}{1+\sqrt{1+4\theta^2(y-x^2)^2}}\right)\right],&
\mbox{if } y > x^2,\\
             \infty,& \mbox{if } y \leq x^2.
            \end{cases}
\end{equation*}

As a consequence, by an application of the Contraction Principle with the
auxiliary continuous
function $f_1(x,y) = x$, we are able to obtain the rate function for the AR(1)
Sample Mean $\overline{X}_n = n^{-1}\sum_{k=1}^n X_k$. Following the same steps
from
Section
\ref{ldpQM}, notice that the infimum
\begin{equation*}
\begin{split}
 I_{\overline{X}}(c) &= \inf_{y > x^2}\{J_{\boldsymbol{S}}(x,y)\,|\,f_1(x,y)=c\}
=
\inf_{y > c^2} J_{\boldsymbol{S}}(c,y) \\
&= \inf_{y > c^2} \left\{\frac{1}{2}\left[y(1+\theta^2)-2 c^2 \theta  -
\sqrt{1+4\theta^2(y - c^2)^2} - \log\left(\frac{2(y - c^2)}{1 + \sqrt{1 +
4\theta^2(y - c^2)^2}}\right)\right]\right\}
\end{split}
\end{equation*}
is attained at
\begin{equation*}
 y_c = \frac{1 +c^2(1 - \theta ^2)}{1-\theta ^2}.
\end{equation*}
Hence, the sequence $(\overline{X}_n)_{n \in \N}$
satisfies an LDP with rate function
\begin{equation*}
 I_{\overline{X}}(c) = J_{\boldsymbol{S}}(c,y_c) = \frac{c^2 (1 - \theta)^2}{2},
\quad \mbox{for } c \in \R.
\end{equation*}
The graphs of $I_{\overline{X}}(\cdot)$ are depicted in Figure \ref{IS} for
three different values of $\theta$. Notice that, $I_{\overline{X}}(\cdot)$ has
the shape of a parabola.

\begin{figure}
   \centering
   \includegraphics[scale = 0.4]{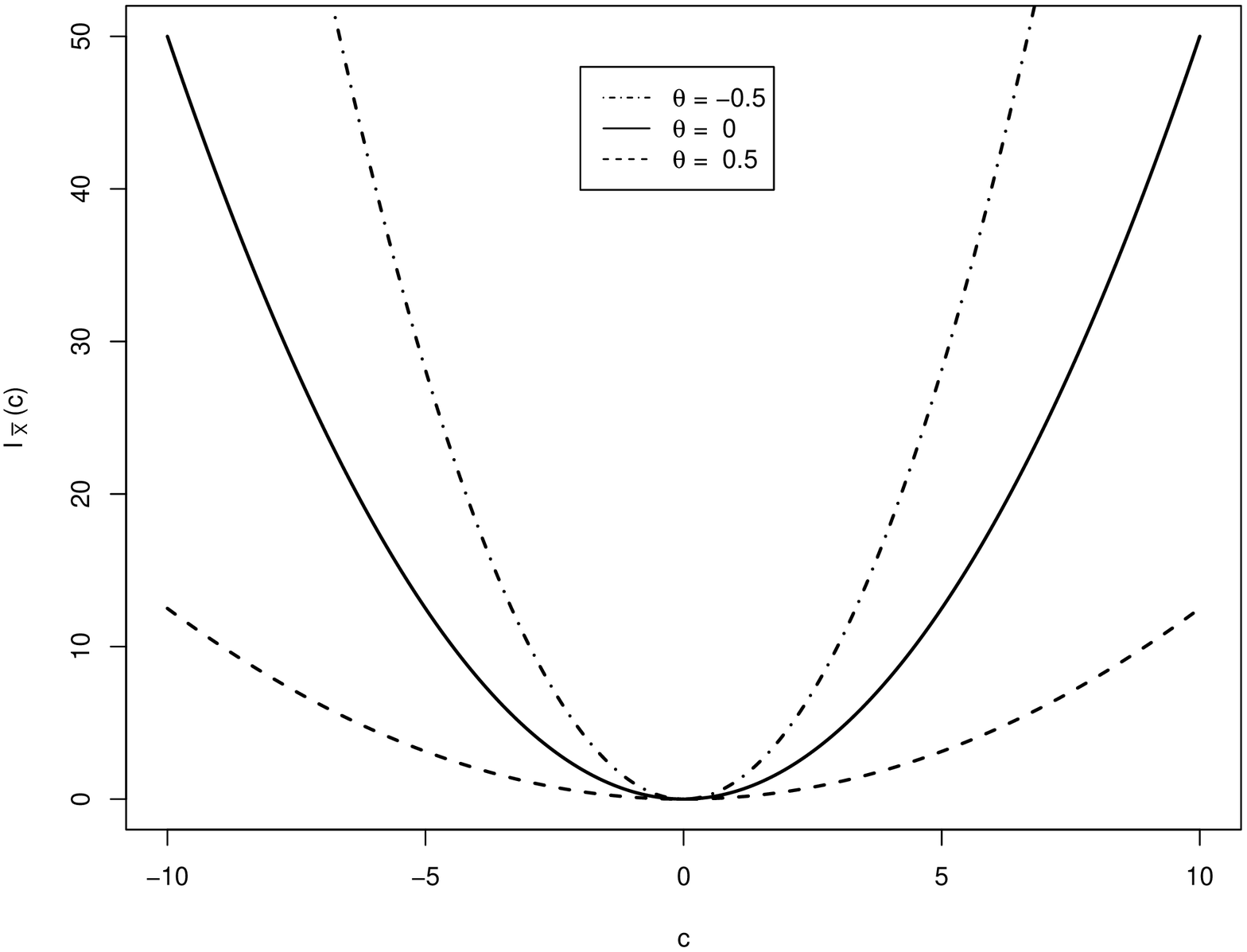}\caption{Graph of
$I_{\overline{X}}(\cdot)$ for $\theta \in \{-0.5, 0, 0.5\}$ and $c \in [-10,
10]$. \label{IS}}
  \end{figure}

\subsection{MA(1) process}

Consider the MA(1) process, defined by the equation
\begin{equation}\label{ma1}
 Y_{n} = \varepsilon_{n} + \phi \varepsilon_{n-1}, \qquad \mbox{with }
|\phi|<1 \mbox{ and } n \in \N.
\end{equation}
 Here, we assume that the innovations $(\varepsilon_n)_{n \geq 0}$ are
i.i.d., with $\varepsilon_n \sim \mathcal{N}(0,1)$. Then, $Y_n \sim
\mathcal{N}(0, 1+\phi^2)$, for each $n \in \N$, and the spectral density
function associated to $(Y_n)_{n \in \N}$ is given by
 \begin{equation*}
   h_\phi(\omega) = 1 + \phi^2 + 2 \phi \cos(\omega), \quad \mbox{for } \omega
\in \T = [-\pi, \pi).
 \end{equation*}
 The process $(Y_n)_{n \in \N}$ is stationary for any $\phi \in
\R$ (see definition 3.4 in Shumway and Stoffer \cite{shumway16}). Nevertheless,
the assumption
$|\phi| < 1$ in \eqref{ma1} ensures that the process is also invertible
and that $h_\phi(\cdot)$ is positive for all $\omega \in \T$.

Let us denote by
 \begin{equation*}
  \tilde\gamma_n(1) = \frac{1}{n} \sum_{k=1}^n Y_k^2
 \end{equation*}
 the Quadratic Mean of a random sample $Y_1, \cdots, Y_n$, following the
MA(1) process described in \eqref{ma1}. Since the autocovariance
function of
$(Y_n)_{n \in \N}$ is equal to
\begin{equation*}
 \gamma_Y(k) = \begin{cases}
                1+\phi^2,& \mbox{if } k = 0,\\
                \phi,& \mbox{if } |k| = 1,\\
                0,& \mbox{if } |k| > 1,
               \end{cases}
\end{equation*}
it is known (see section 7.3 in Brockwell and Davis \cite{brockwell91}) that
\begin{equation*}
 \tilde{\gamma}_n(0) \xrightarrow{n \to \infty} \gamma_Y(0) = 1 + \phi^2, \quad
\mbox{almost surely}.
\end{equation*}

We shall prove that the sequence $(\tilde\gamma_n(0))_{n \in \N}$ satisfies
an LDP. For this reason, consider the normalized cumulant generating function
\begin{equation*}
 L_n(\lambda) = \frac{1}{n} \log \E(e^{\lambda \tilde\gamma_n(0)}).
\end{equation*}
In this case, the asymptotic distribution of $L_n(\cdot)$ is
known (see Grenander and Szeg\"{o} \cite{grenander58}) and we immediately
obtain the convergence
\begin{equation*}
 \lim_{n \to \infty} L_n(\lambda) = L(\lambda) =
 \begin{cases}
  -\frac{1}{4 \pi} \int_\T \log[1 - 2 \lambda h_\phi(\omega)]\, d\omega,&
\mbox{if } \lambda \in \left(-\infty, \frac{1}{2 M_{h_\phi}}\right), \\
  \infty,& \mbox{otherwise},
 \end{cases}
\end{equation*}
 where $M_{h_\phi}$ denotes the essential suppremum of $h_{\phi}(\cdot)$,
given by
\begin{equation*}
 M_{h_\phi} = \begin{cases}
        \frac{1}{2(1+\phi)^2},& \mbox{if } \phi \geq 0,\\
        \frac{1}{2(1-\phi)^2},& \mbox{if } \phi < 0.
       \end{cases}
\end{equation*}

As presented in Bercu {\it et al}.\ \cite{bercu97} and corollary 1 in Bryc and
Dembo \cite{bryc97},
$(\tilde\gamma_n(0))_{n \in \N}$ satisfies an LDP whose good rate function is
the Fenchel-Legendre dual of $L(\cdot)$, given by
\begin{equation}\label{supMA1}
 K_\phi(x) =
 \begin{cases}
 \sup_{\lambda < \frac{1}{2 M_{h_\phi}}} \left\{x \lambda + \frac{1}{4 \pi}
\int_\T \log[1-2 \lambda h_{\phi}(\omega)]\, d\omega\right\},& \mbox{for } x >
0,\\
 \infty,& \mbox{for } x  \leq 0.
 \end{cases}
\end{equation}
Since
\begin{equation*}
 \begin{split}
  \int_\T \log[1-2 \lambda h_{\phi}(\omega)]\, d\omega &= \int_\T \log[1-2
\lambda (1 + \phi^2) - 4 \lambda \phi \cos(\omega)]\,d\omega \\
 & = \log \left[\frac{1-2\lambda(1+\phi^2)+\sqrt{(1-2\lambda(1+\phi^2))^2 -
16 \lambda^2 \phi^2}}{2}\right],
 \end{split}
\end{equation*}
the suppremum in \eqref{supMA1} is attained at
\begin{equation}\label{lambdaphi}
 \lambda_\phi(x)=\frac{A_\phi(x) +\frac{B_\phi(x)}{C_\phi(x)} +
C_\phi(x)}{12 x^2 \left(\phi ^2-1\right)^2},
\end{equation}
where
\begin{equation*}
 A_\phi(x) = 4 x \left(x \left(\phi ^2+1\right)-\left(\phi^2
-1\right)^2\right),
\end{equation*}
\begin{equation*}
 B_\phi(x) = 4 x^2 \left(x^2 \left(\phi ^4+14 \phi
^2+1\right)+4 x \left(\phi ^2+1\right) \left(\phi ^2-1\right)^2+\left(\phi
^2-1\right)^4\right),
\end{equation*}
and
\begin{equation*}
 \begin{split}
 C_\phi(x) =& -(1+i\sqrt{3})\left[-x^6 \left(\phi ^6-33 \phi ^4-33 \phi
 ^2+1\right)-6 x^5 \left(\phi ^2-1\right)^2 \left(\phi ^4-10 \phi
 ^2+1\right)\right.\\
 &\left. \hspace{24mm} +6 x^4 \left(\phi ^2-1\right)^4 \left(\phi
^2+1\right)+x^3
 \left(\phi ^2-1\right)^6+3 \sqrt{3}\sqrt{c_\phi(x)} \ \right]^{1/3},
 \end{split}
\end{equation*}
with
\begin{equation*}
\begin{split}
 c_\phi(x) = -x^8 \left(\phi ^2-1\right)^4 &\left( 4 x^4 \phi ^2+32 x^3
\left(\phi ^4+\phi ^2\right)+x^2 \left(\phi ^4+46 \phi ^2+1\right) \left(\phi
^2-1\right)^2\right.\\
& \left. \ +6 x \left(\phi ^2+1\right) \left(\phi^2-1\right)^4+\left(\phi
^2-1\right)^6\right).
\end{split}
\end{equation*}

\begin{remark}
  Although $C_\phi(\cdot)$ appears in a complex form, it can be proved that
$B_\phi(x)/C_\phi(x) + C_\phi(x) \in \R$, for any $x >
0$. In fact, $\lambda_\phi(x)$ in \eqref{lambdaphi} is one of the
solutions from the polynomial equation
\begin{equation*}
 \begin{split}
& \lambda^3 \left(4 x^2 \phi ^4-8 x^2 \phi ^2+4
x^2\right)+\lambda^2 \left(-4 x^2 \phi ^2-4 x^2+4 x \phi ^4-8 x \phi ^2+4
x\right)\\
&+\lambda \left(x^2-4 x \phi ^2-4 x+\phi ^4-2 \phi ^2+1\right)+x-\phi ^2-1 = 0,
\end{split}
\end{equation*}
which has three real roots if $x > 0$. Moreover, we have $\lambda_\phi(x) <
\frac{1}{2 M_{h_\phi}}.$
\end{remark}

We conclude that $(\tilde\gamma_n(0))_{n \in \N}$ satisfies an LDP with
rate function given by
\begin{equation}\label{Kphi}
 \begin{split}
 &K_\phi(x) = x \lambda_\phi(x) + \frac{1}{2} \log \left[\frac{1 -
2\lambda_\phi(x)(1+\phi^2) + \sqrt{(1- 2 \lambda_\phi(x) (1+\phi^2))^2 - 16
\lambda_\phi(x)^2 \phi^2}}{2}\right] \\
&= \frac{f_\phi(x)}{12 x \left(\phi
^2-1\right)^2} + \frac{1}{2} \log \left(\frac{1}{2}-\frac{\left(\phi
^2+1\right) f_\phi(x)}{12 x^2 \left(\phi ^2-1\right)^2}+
\sqrt{\left(\frac{1}{2}-\frac{\left(\phi ^2+1\right) f_\phi(x)}{12
x^2 \left(\phi ^2-1\right)^2}\right)^2-\frac{\phi ^2
f_\phi(x)^2}{36 x^4\left(\phi ^2-1\right)^4}}\right),
\end{split}
\end{equation}
for all $x > 0$ and $K_\phi(x) = \infty$, for $x \leq 0$, with
\begin{equation*}
 f_\phi(x) = A_\phi(x) + \frac{B_\phi(x)}{C_\phi(x)} + C_\phi(x).
\end{equation*}
The graph of $K_\phi(\cdot)$ is illustrated in Figure \ref{K1} for four
different values of $\phi$ and $x \in (0, 5]$.

 \begin{figure}[ht]
   \centering
   \includegraphics[scale = 0.4]{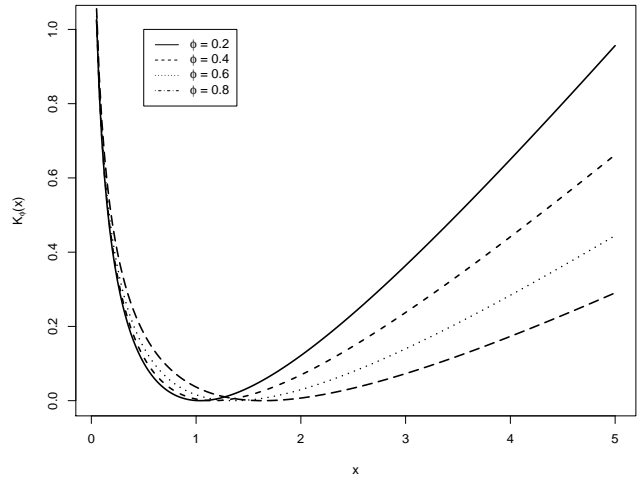}\caption{Graphs of
$K_\phi(\cdot)$ for $\phi \in \{0.2, \, 0.4, \, 0.6, \, 0.8\}$ and $x \in (0,
5]$. \label{K1}}
  \end{figure}

  By Proposition \ref{propbryc} we may now conclude that
$(\boldsymbol{S}_n)_{n\in \N}$ satisfies an LDP with rate function
\begin{equation*}
 K_{\boldsymbol{S}}(x,y) =
 \begin{cases}
  K_\phi(y-x^2)+\frac{x^2}{2(1 + \phi)^2},& \mbox{if } y > x^2,\\
  \infty,& \mbox{if } y \leq x^2.
 \end{cases}
\end{equation*}
where $K_\phi(\cdot)$ is given in \eqref{Kphi}.

Note that, by the Contraction Principle, the sequence
$(f_1(\boldsymbol{S_n}))_{n
\in \N} = \left(n^{-1} \sum_{k=1}^{n} Y_k\right)_{n \in \N}$, where $f_1(x,y) =
x$ and $\boldsymbol{S}_n = n^{-1} \left(\sum_{k=1}^n Y_k, \sum_{k=1}^n
Y_k^2\right)$,
must
satisfy an LDP with rate function
\begin{equation}\label{nontrivinf}
 \begin{split}
 I_{\overline{Y}}(c) &= \inf_{y > x^2}\{K_{\boldsymbol{S}}(x,y)\,|\,f_1(x,y)=c\}
\\
& = \inf_{y > c^2} K_{\boldsymbol{S}}(c,y) = \inf_{y > c^2}
\left\{K_\phi(y-x^2)+\frac{x^2}{2(1 +\phi)^2}\right\}.
 \end{split}
\end{equation}
However, when trying to compute the infimum in \eqref{nontrivinf}, we face
a non-trivial problem.

Fortunately, an LDP for the Sample Mean of moving average processes has already
been given in Burton and Dehling \cite{burton90}, where the authors considered
the sequence
\begin{equation*}
 X_n = \sum_{k \in \Z} a_{k+n} \, \varepsilon_k, \quad \mbox{for } n \in \Z,
\end{equation*}
 with $(\varepsilon_n)_{n \in \Z}$ a sequence of i.i.d.\ random variables.
They proved the LDP under the hypotheses that $(a_n)_{n \in \Z}$ is an
absolutely summable sequence and that the moment generating function
$\E(e^{t\varepsilon_1})$ is finite, for all $t \in \R$. In
Djellout and Guillin \cite{djellout01}, a similar approach has been given. In
this paper, the authors proved an analogous result under the hypotheses that
the sequence $(\varepsilon_n)_{n \in Z}$ is bounded and that $\sum_{k \in \Z}
a_k^2 < \infty$. If we set $a_{2n} = 1$, $a_{2n-1} = \phi$ and $a_k = 0$ for $k
\in \Z
\setminus \{2n,2n-1\}$, then $X_n = \varepsilon_n + \phi \varepsilon_{n-1}$ is
the MA(1) process given in \eqref{ma1}, as long as the same hypotheses for
the distribution of $(\varepsilon_n)_{n \geq 0}$ are considered. Then by theorem
2.1 in Burton and Dehling \cite{burton90}, the Sample Mean $(\overline{Y}_n)_{n
\in \N} =
\left(n^{-1} \sum_{k=1}^n Y_k \right)_{n \in \N}$ satisfies an LDP with rate
function
\begin{equation*}
 I_{\overline{Y}}(c) = \sup_{\lambda \in
\R} \left\{\frac{c \, \lambda}{1+\phi} - \frac{\lambda^2}{2}\right\} =
\frac{c^2}{2(1+\phi)^2}, \quad \mbox{for } c \in \R.
\end{equation*}
% -----
% END Section 4
% -----

% -----
% BEGIN Section 5
% -----
\section{Conclusion}

In this work, we showed that an LDP is available for the sequence $(\W_n)_{n
\geq 2}$, given in \eqref{Wn}. The same technique to find such LDP is not
restricted to the AR(1) process. There may exist other classes of processes
that can be explored as well. If we take another process $(Z_n)_{n \in \N}$
which still has a multivariate Gaussian distribution, equipped with another
spectral density function, other than the one given in \eqref{spectral},
the proposed technique may remain valid. The LDP is, however, not always
guaranteed and in most cases, the rate function is hard to compute. This
difficulty mainly arises when trying to compute a closed form for the
Fenchel-Legendre transform. Besides that, to obtain a similar convergence
result as given in Lemma \ref{lemma2}, for another class of Gaussian
processes remains an intriguing problem. A remarkable class of processes that
requires a more sophisticated approach, is the class of MA(1) process, which
was not covered in this work when evaluating the LDP for the random vectors
$(\W_n)_{n \geq 2}$.

In Section 3, we presented three important particular examples using the
previous reasoning from Section 2, together with the Contraction Principle. Two
of these examples were already known from Bercu {\it et al}.\ \cite{bercu97}
and Bryc and Smolenski \cite{bryc93} for
univariate sequences. Here we obtained them as a continuous transformation of
the random vector $\W_n$, given in \eqref{Wn}. In Subsection 3.2, we presented
a result which we believe is new in the literature. In Subsection 3.3, the LDP
for the Yule-Walker estimator was obtained, via the Contraction Principle,
getting the same result as in Bercu {\it et al}.\ \cite{bercu97}. The approach
used here, first
proving an LDP for bivariate random vectors and then particularizing to
univariate random sequences via Contraction Principle has recently been used
with continuous stochastic processes by Bercu and Richou \cite{bercu15}, where
the authors investigated the LDP of the maximum likelihood estimates for the
Ornstein-Uhlenbeck process with shift. A similar approach was subsequently used
by the same authors in Bercu and Richou \cite{bercu17}, allowing them to
circumvent the classical
difficulty of non-steepness.

In Section 4, we provided an LDP for the sequence of bivariate SQ-Mean, for both
AR(1) and MA(1) processes. For the AR(1) process, the computations were simple
and the previous technique of proving an LDP for the bivariate random vector
$\W_n$ was extremely helpful. Nevertheless, when dealing with the MA(1)
process, we found some issues due to the complexity of the computations
involved. The same technique explored here may perhaps be available for general
AR($d$) processes with Gaussian innovations. This is an important issue to be
explored in the future.
% -----
% END Section 5
% -----

\appendix

\section{Proof of Lemma \ref{domD}}\label{appenA}
  In this appendix, we give the details for the proof of Lemma \ref{domD},
which was based on the techniques given in page 270 in Jensen
\cite{jensen95}. In summary, we use Sylvester's Criterion (see theorem 7.2.5
in Horn \cite{horn13}) to check for the positive definiteness of each principal
minor of $D_{n,\boldsymbol{\lambda}}$ and resort to the use of an auxiliary
function
with its corresponding iterates.

  By Sylvester's Criterion, $D_{n,\boldsymbol{\lambda}}$ is positive definite,
if
and only if, the principal minors of $D_{n,\boldsymbol{\lambda}}$ are positive.
Hence, we analyze each one of the principal minors of
$D_{n,\boldsymbol{\lambda}}$ as follows:

\begin{itemize}
 \item  {\it 1-st Step:} since the first principal minor of
$D_{n,\boldsymbol{\lambda}}$ is $r_1 = 1 - 2 \lambda_1$, we require that $r_1
> 0$. As a consequence, since $p = r_1 + \theta^2$, we obtain
  \begin{equation*}
   0 < r_1 < p \Rightarrow 0 < p.
  \end{equation*}

 \item   {\it 2-nd Step:} the second principal minor of
$D_{n,\boldsymbol{\lambda}}$
is defined as the determinant
  \begin{equation}\label{2nd}
   \left|\begin{array}{cc}
        r_1    & q \\
        q    & p
       \end{array}\right| = p\,r_1 - q^2 = \left(p - \frac{q^2}{r_1}\right) r_1.
  \end{equation}
  Since we already restricted our analysis for $r_1 > 0$, \eqref{2nd}
requires in addition that $r_2:= p - \frac{q^2}{r_1} > 0$.

 \item
  {\it 3-rd Step:} the third principal minor of $D_{n,\boldsymbol{\lambda}}$ is
the
determinant
  \begin{equation}\label{3rd}
   \left|\begin{array}{ccc}
        r_1 & q & 0   \\
        q & p & q \\
        0   & q & p \\
       \end{array}\right| = p^2\,r_1 - q^2\, r_1 - q^2\,p = \left(p -
\frac{q^2}{p - \frac{q^2}{r_1}}\right)\left(p - \frac{q^2}{r_1}\right)\,r_1.
  \end{equation}
  Since we already restricted our analysis for $r_1 > 0$ and $p -
\frac{q^2}{r_1} > 0$, \eqref{3rd} requires that $r_3:=\left(p -
\frac{q^2}{p - \frac{q^2}{r_1}}\right) > 0$.

 \item   {\it k-th Step:} by induction, the $k$-th principal minor of
$D_{n,\boldsymbol{\lambda}}$, for $ 1 \leq k \leq n-1$, is the determinant
  \begin{equation*}
   \left|\begin{array}{ccccc}
        r_1    & q    & 0 & \cdots & 0 \\
        q    & p    & \ddots & \ddots & \vdots \\
        0      & q & \ddots & q & 0\\
        \vdots & \ddots & \ddots & p & q\\
        0      & \cdots & 0 & q & r_1 \\
       \end{array}\right| = r_k \, r_{k-1} \, \cdots \, r_2 \, r_1,
  \end{equation*}
  for $r_2 = p - \frac{q^2}{r_1}$, $r_3 = \left(p - \frac{q^2}{p -
\frac{q^2}{r_1}}\right)$ and $r_k  = G^{k-1}(r_1)$, where $G^k$ denotes the
$k$-th iterate of $G:(0, \infty) \rightarrow (0, \infty)$, given by
  \begin{equation*}
   G(a) = p - \frac{q^2}{a}.
  \end{equation*}

  Since $n \in \N$ is arbitrary, we must require that $G^k(r_1) > 0$, for all $k
\in \N$. Without loss of generality, we may assume that $q \neq 0$ (if $q = 0$,
then $D_{n,\boldsymbol{\lambda}}$ is a diagonal matrix; this happens if and only
if
$\lambda_2 = -\theta$).

  Notice that $G(\cdot)$ has the following two fixed points
  \begin{equation*}
   R = \frac{1}{2}\left(p - \sqrt{p^2 - 4 q^2}\right) \qquad \mbox{ and } \qquad
Q = \frac{1}{2}\left(p+\sqrt{p^2 - 4 q^2}\right).
  \end{equation*}
The point named $Q$ is an attractor point and the point named $R$ is a repulsor
point, provided that $p^2 > 4 q^2$. If $p^2 = 4q^2$, then $P = Q = p$
is neither an attractor, neither a repulsor point. Let us consider henceforth
$p^2 > 4 q^2$.

Observe that $G(\cdot)$ is an increasing concave function. Therefore, the
problem of knowing when $G^k(r_1) > 0,$ for all $k \in \N$, reduces to knowing
where $r_1 > R$. In one hand, every point greater than $R$ converges
towards $Q$ and since $R > 0$, it follows that
  \begin{equation*}
   G^k(r_1) > R > 0, \quad \mbox{for all } k \in \N.
  \end{equation*}
  On the other hand,
  \begin{equation*}
   \forall \, x < R, \ \exists \, n_0 \in \N; \ G^{n_0}(x) < 0.
  \end{equation*}
  Since $r_1 = 1 - 2 \lambda_1 = p - \theta^2$, we get
  \begin{equation}\label{rR}
   r_1 > R \Leftrightarrow r_1 > \frac{p - \sqrt{p^2 - 4 q^2}}{2}
\Leftrightarrow \sqrt{p^2 - 4 q^2} > p - 2 r_1 = p - 2(p - \theta^2) = 2
\theta^2 - p.
  \end{equation}

  If $p \geq 2\theta^2$, then obviously $r_1> R$, since the right-hand side of
\eqref{rR} is non-positive. But if $p < 2\theta^2$, then
  \begin{equation*}
   r_1 > R \Leftrightarrow p^2 - 4 q^2 > \left(2\theta^2 - p\right)^2
   \Leftrightarrow p^2 - 4q^2 > 4\theta^4 - 4 \theta^2 p +p^2 \Leftrightarrow
\theta^2(p - \theta^2) > q^2.
  \end{equation*}
  Therefore, we obtain the domain $\mathcal{D} = \mathcal{D}_1 \cup
\mathcal{D}_2$, where
  \begin{equation*}
   \begin{split}
   &\mathcal{D}_1 = \{r_1 > 0, \ p^2 > 4 q^2, \ p \geq 2 \theta^2\} \quad
\mbox{ and} \\[1.5mm] &\mathcal{D}_2 = \{r_1 > 0, \ p^2 > 4 q^2, \ p < 2
\theta^2, q^2 < \theta^2
(p - \theta^2)\}.
   \end{split}
  \end{equation*}

  Notice that $r_1 > 0$ is equivalent to $p > \theta^2$. Moreover, from
  \begin{equation*}
   0 > -\left(\theta^2 - \frac{p}{2}\right)^2 = - \theta^4 + 2 \theta^2
\frac{p}{2} - \frac{p^2}{4} = - \theta^4 + \theta^2 p - \frac{p^2}{4} =
\theta^2(p - \theta^2) - \frac{p^2}{4},
  \end{equation*}
  we conclude that $\theta^2(p - \theta^2) < \frac{p^2}{4}$. Hence, if $q^2 <
\theta^2 (p - \theta^2)$, it follows that $4 q^2 < p^2$.  Therefore, if $p, q$
belong to
  \begin{equation*}
   \mathcal{D}_1 = \{p \geq 2 \theta^2, \ p^2 > 4 q^2\} \quad \mbox{or} \quad
\mathcal{D}_2 = \{\theta^2 < p < 2 \theta^2, \ q^2 < \theta^2 (p - \theta^2)\},
  \end{equation*}
 then $G^k(r_1) > 0,$ for all $k \in \N$.

  \item    {\it n-th Step}: last but not least, the $n$-th principal minor
(or determinant) of $D_{n,\boldsymbol{\lambda}}$ is
  \begin{equation*}
   \begin{split}
   |D_{n,\boldsymbol{\lambda}}| &=
   \left|\begin{array}{ccccc}
        r_1    & q    & 0 & \cdots & 0 \\
        q    & p    & \ddots & \ddots & \vdots \\
        0      & q & \ddots & q & 0\\
        \vdots & \ddots & \ddots & p & q\\
        0      & \cdots & 0 & q & r_1 \\
       \end{array}\right| = r_1 (r_{n-1} \, r_{n-2} \cdots r_2\, r_1) - q^2
(r_{n-2} \cdots r_2 \, r_1) \\
       & = \left(r_1 - \frac{q^2}{r_{n-1}}\right) (r_{n-1} \, r_{n-2} \cdots
r_2\, r_1)
        = (G^{n-1}(r_1) - \theta^2) (r_{n-1} \, r_{n-2} \cdots r_2\, r_1).
   \end{split}
  \end{equation*}

  It is not difficult to see that, for $n$ large enough, we eventually obtain
$G^{n-1}(r_1) > \theta^2$. Indeed
  \begin{equation*}
   p > \theta^2 \Rightarrow Q > \theta^2 \Rightarrow \lim_{n \to \infty}
G^n(r_1) = Q > \theta^2,
  \end{equation*}
  so that
  \begin{equation*}
   \exists\, n_0 \in N;  n \geq n_0 \Rightarrow G^n(r_1) > \theta^2.
  \end{equation*}

\end{itemize}

The set $\mathcal{D}_1 \cup \mathcal{D}_2$ is therefore, the closed domain
where all principal minors of $D_{n,\boldsymbol{\lambda}}$ are positive, and
consequently, where the matrix $D_{n,\boldsymbol{\lambda}}$ is positive
definite.
Converting the domains $\mathcal{D}_1$ and $\mathcal{D}_2$ to the $(\lambda_1,
\lambda_2)$ notation, we obtain the desired expressions given by
\eqref{D12}.
% -----
% END Apendix A
% -----

% -----
% BEGIN Acknowledgments
% -----
\section*{Acknowledgments}

M.J. Karling was supported by Coordena\c{c}\~{a}o de Aperfei\c{c}oamento de
Pessoal
de N\'{i}vel Superior (CAPES)-Brazil and Conselho Nacional
de Desenvolvimento Cient\'{i}fico e Tecnol\'{o}gico (CNPq)-Brazil
(170168/2018-2).
A.O. Lopes' research was partially supported by CNPq-Brazil (304048/2016-0).
S.R.C. Lopes' research was partially supported by CNPq-Brazil (303453/2018-4).
The authors wish to express their sincere thanks to Dr.\ Bernard Bercu for
indicating valuable references from the Large Deviations theory.

% -----
% END Acknowledgments
% -----

\end{document}